\lstdefinelanguage{Sage}[]{Python}
{morekeywords={False,sage,True},sensitive=true}
\definecolor{dblackcolor}{rgb}{0.0,0.0,0.0}
\definecolor{dbluecolor}{rgb}{0.01,0.02,0.7}
\definecolor{dgreencolor}{rgb}{0.2,0.4,0.0}
\definecolor{dgraycolor}{rgb}{0.30,0.3,0.30}
\theoremstyle{plain}
\newtheorem{theorem}{Theorem}[section]
\newtheorem{proposition}[theorem]{Proposition}
\newtheorem{lemma}[theorem]{Lemma}
\newtheorem{example}[theorem]{Example}
\newtheorem{definition}[theorem]{Definition}
\theoremstyle{remark}
\newtheorem*{remark}{Remark}
\def\Gal{{\rm Gal}}
\def\Im{{\rm Im}}
\def\End{{\rm End}}
\def\Z{{\mathbb{Z}}}
\def\F{{\mathbb{F}}}
\def\C{{\mathbb{C}}}
\def\Q{{\mathbb{Q}}}
\def\O{{\mathcal{O}}}
\def\Jac{{\rm Jac}}
\def\Aut{{\rm Aut}}
\newcommand{\Sp}{\operatorname{Sp}}
\def\smallunderbrace#1{\mathop{\vtop{\m@th\ialign{##\crcr
   $\hfil\displaystyle{#1}\hfil$\crcr
   \noalign{\kern3\p@\nointerlineskip}
   \tiny\upbracefill\crcr\noalign{\kern3\p@}}}}\limits}
\def\smalloverbrace#1{\mathop{\vbox{\m@th\ialign{##\crcr\noalign{\kern3\p@}
     \tiny\downbracefill\crcr\noalign{\kern3\p@\nointerlineskip}
      $\hfil\displaystyle{#1}\hfil$\crcr}}}\limits}
\subjclass[2010]{Primary 11G15; Secondary 11G30, 14H42, 14Q05}
\begin{document}

\title{Constructing genus 3 hyperelliptic Jacobians with CM}
\author{Jennifer S. Balakrishnan}
\address{Jennifer S. Balakrishnan, Mathematical Institute, University of Oxford, Woodstock Road, Oxford OX2 6GG, UK}
\email{balakrishnan@maths.ox.ac.uk}
\author{Sorina Ionica}
\address{Sorina Ionica, MIS, Universit\'e de Picardie Jules Verne, 33 Rue Saint Leu, 80000 Amiens, France}
\email{sorina.ionica@m4x.org}
\author{Kristin Lauter}
\address{Kristin Lauter, Microsoft Research, 1 Microsoft Way, Redmond, WA 98062, USA}
\email{klauter@microsoft.com}
\author{Christelle Vincent}
\address{Christelle Vincent, Department of Mathematics and Statistics, The University of Vermont, 16 Colchester Avenue, Burlington VT 05401, USA}
\email{christelle.vincent@uvm.edu}

\begin{abstract} Given a sextic CM field $K$, we give an explicit method for finding all genus $3$ hyperelliptic curves defined over $\C$ whose Jacobians are simple and have complex multiplication by the maximal order of this field, via an approximation of their Rosenhain invariants. Building on the work of Weng~\cite{WengGenus3}, we give an algorithm which works in complete generality, for any CM sextic field $K$, and computes minimal polynomials of the Rosenhain invariants for any period matrix of the Jacobian.  This algorithm can be used to generate genus 3 hyperelliptic curves over a finite field $\F_p$ with a given zeta function by finding roots of the Rosenhain minimal polynomials modulo $p$.
\end{abstract}

\maketitle

\section{Introduction}
We consider the problem of constructing genus $3$ hyperelliptic curves defined over $\C$ with the property that their Jacobians are simple and admit complex multiplication (CM) by the maximal order of a sextic field. The interest in this question stems from the situation in genera $1$ and $2$, where curves over a finite field with CM by a given field can be found by first computing curves defined over number fields with CM by the field of interest and then reducing these curves modulo a prime ideal, under some hypotheses that guarantee that the prime splits and that the endomorphism ring does not become larger.

In genus $3$, however, the situation is more interesting. Up to isomorphism over $\C$, every simple principally polarized abelian variety (ppav) of dimension $3$ is the Jacobian of a complete smooth projective curve of genus $3$. Furthermore, if $X$ is such a curve, then by Riemann-Roch, $X$ is  isomorphic either to a hyperelliptic or a plane quartic curve. If $A$ is a simple ppav of dimension $3$ that is isomorphic to the Jacobian of a hyperelliptic curve, resp. a plane quartic curve, we will call it a \emph{hyperelliptic Jacobian}, resp. a \emph{plane quartic Jacobian}. We note that the subspace of hyperelliptic Jacobians has codimension 1 in the moduli space of ppav of dimension 3.

If we are interested only in generating hyperelliptic curves whose Jacobians have CM,  then given a sextic CM field $K$, we consider the set of simple ppav having CM by the maximal order $\O_K$ and ask some natural questions: Does this set contain hyperelliptic Jacobians? And, what conditions on $K$ determine if this set contains hyperelliptic Jacobians? 

Our work takes steps toward answering these questions by presenting an algorithm that, given a CM sextic field $K$, first constructs a period matrix for each isomorphism class of simple ppav with CM by $\O_K$, then verifies computationally if the abelian variety is the Jacobian of a hyperelliptic curve. If this is the case, it computes minimal polynomials for the Rosenhain invariants of the hyperelliptic curve. After the computation is completed, we check experimentally that the hyperelliptic curves we computed have CM, as explained in $\S$\ref{results}.

Using our implementation of the algorithm, we have carried out the computations described above for all Galois CM sextic fields $K$ with class number $1$. Some examples of the minimal polynomials computed can be found in $\S$\ref{results}, along with a list of such fields $K$ that admit a hyperelliptic Jacobian. The code we have written is available on GitHub \cite{github}. We conjecture that this list is complete: there are exactly four hyperelliptic curves, up to isomorphism, with CM by a Galois sextic field of class number $1$.

In 2001, Weng \cite{WengGenus3} carried out computations similar to those presented here. She showed that if $A$ is a simple ppav of dimension $3$ having CM by the maximal order of a sextic field $K$ such that $\Q(i)\subset K$, then $A$ is the Jacobian of a hyperelliptic curve. While she restricts herself to such fields, we do not, as stated above. In other ways, however, she computes more than we do: Weng approximates the Rosenhain coefficients and uses them to get the Shioda invariants of the hyperelliptic curves. In addition, she provides models for several hyperelliptic curves of genus $3$ with CM by sextic fields $K$. In particular, she exhibits four hyperelliptic curves with CM whose model is defined over $\Q$. Unfortunately, Weng's implementation is not publicly available, and the paper contains typos and imprecisions,  which make it is hard to directly reproduce her computations. In this work, we correct these imprecisions and give a proof of the correctness of the algorithm. See Remark~\ref{compareWeng} and $\S$\ref{list} for further comments and comparisons with Weng's work.

We note that so far all sextic CM fields admitting a hyperelliptic Jacobian that have been found contain either a fourth or a seventh root of unity. It is of interest to attempt to find a sextic CM field $K$ admitting a hyperelliptic Jacobian but containing only the roots of unity $-1$ and $1$, with the aim of determining experimentally that this is not a necessary condition for $K$ to admit a hyperelliptic Jacobian.

For this reason and to answer the questions presented earlier, in forthcoming work we plan to use this algorithm to explore the case of general CM sextic $K$. Currently the issues that prevent us from carrying out these computations have to do with precision: we need to estimate the tail of the infinite series giving the theta constants so that we can ensure \emph{a priori} that our results are correct to a certain precision. To obtain these estimates, we need an algorithm that takes any period matrix $Z \in \mathcal{H}_3$ and computes a representative in the same $\Sp_6(\Z)$-equivalence class but belonging to a suitable fundamental domain. We give more details in $\S$\ref{results}.

We particularly aim at finding examples of fields $K$ admitting both a hyperelliptic and a plane quartic Jacobian, if they exist. This question is closely related to the construction and use in cryptography of genus 3 hyperelliptic Jacobians defined over finite fields, with CM by a given sextic field. Indeed, it is well known that discrete log attacks on plane quartic Jacobians are more efficient than on genus 3 hyperelliptic Jacobians~\cite{Diem,Gaudry,Laine}. Consequently, evaluating the security of a genus $3$ hyperelliptic Jacobian requires a good understanding of the types of Jacobians appearing in its isogeny class~\cite{Smith}.

In the body of the paper we present some background and references for our algorithm, as well as some results which were needed to carry out the computation. In particular, in Example \ref{etabar} we present an example of a period matrix in a $\Gamma_2$-equivalence class not previously considered by Mumford \cite{Mumford1}, but which we must consider to make our algorithm truly applicable to any period matrix. In $\S$\ref{vanishing} we also verify, using Mumford's \cite{Mumford1,Mumford} and Poor's work \cite{poor}, that the Thomae formulae can be used to compute hyperelliptic models starting from any period matrix, without making a specific choice for the basis for homology. Although similar computations have been performed before~\cite{weber,WengGenus3}, we were not able to find a proof of these formulae in the literature.

This paper is organized as follows. In $\S$\ref{preliminaries} we present certain results needed to generate all primitive CM types of a CM sextic field, up to equivalence. These CM types are needed to carry out the algorithm given by Koike and Weng \cite{koikeweng} to enumerate period matrices of simple ppav with CM by a fixed field $K$, which is presented in Appendix \ref{periodmatrixalgorithm} for completeness. In $\S$\ref{mapseta}, we introduce a set of maps denoted $\Xi_g$, first defined by Poor \cite{poor}, and show how to attach such a map to a hyperelliptic Jacobian. In $\S$\ref{vanishing}, we introduce theta functions, the Vanishing Criterion and a formula to compute the model of a hyperelliptic curve given the period matrix of its Jacobian. In $\S$\ref{computingeta} we give, and provide justification for, our algorithm to compute the map $\eta$ attached to a hyperelliptic Jacobian. Finally in $\S$\ref{results} we present our complete algorithm, as well as selected examples of the minimal polynomials for the Rosenhain invariants that we have computed.

\section{Computing period matrices}\label{preliminaries}
Given a CM field $K$, to compute ppav with CM by the full ring of integers of $K$, we rely on the CM theory of Shimura and Taniyama~\cite{Shimura}. In genus 3, an explicit construction was presented by Koike and Weng~\cite{koikeweng}. We use their algorithm and present here a single result needed to complete our work, as well as the most basic facts needed to put this result in context. For further details, we refer the reader to Lang \cite{lang} or Birkenhake and Lange \cite{Birkenhake}.

We will use the term \emph{period matrix} to refer to an element $Z \in \mathcal{H}_g$, where
\begin{equation}\label{eq:upperhalf}
\mathcal{H}_g = \{ M \in M_{g \times g}(\mathbb{C}) : M^{T} = M, \Im(M) >0 \}.
\end{equation}
To such a period matrix we can associate a lattice $L_Z$ generated by the columns of the matrix $(\mathds{1}_g,Z)$, where $\mathds{1}_g$ is the $g \times g$ identity matrix.

This lattice gives rise to an abelian variety $A$ whose underlying torus is isomorphic to $\C^{g}/L_Z$. In this paper, we focus on the case where $\End(A)= \O_K$, for $\O_K$ the ring of integers of a CM field $K$ of degree $2g$. We will say that such an $A$ has \emph{CM by $K$}, and by this we will always mean that the endomorphism ring of $A$ is the full ring of integers $\O_K$.

To each abelian variety of dimension $g$ defined over $\C$ with CM by $K$ is attached a $g$-tuple of complex embeddings of $K$ (i.e., embeddings of $K$ into $\mathbb{C}$), no two of which are complex conjugates, called the variety's \emph{CM type}. Conversely, when constructing such an abelian variety, we must first choose a CM type.
An abelian variety over $\C$ with CM by the ring of integers $\mathcal{O}_K\subset K$ is given by $A=\C^g/\Phi(\mathfrak{a})$, where $\mathfrak{a}$ is an ideal of $\mathcal{O}_K$ and $\Phi$ is a CM type. This variety is said to be of CM type $(K,\Phi)$.

We are interested only in constructing \emph{simple} abelian varieties, a property which is completely controlled by the choice of CM type. Indeed, fix $(K, \Phi)$ a CM type and let $L$ be the Galois closure of $K$ over $\Q$. Throughout, let $G$ be the Galois group $\Gal(L/\Q)$, and set $H= \Gal(L/K)$. Define the sets
\begin{equation*}
S  = \{ \sigma \in G : \sigma|_K = \phi_i, \text{ for one of } i = 1,\ldots, g\}, \quad \text{and} \quad H' =\{ \gamma \in G : S \gamma = S   \}.
\end{equation*}
A CM type $(K, \Phi)$ is called \emph{primitive} (or \emph{simple} in Lang \cite{lang}) if $H = H'$. It can be shown that an abelian variety of CM type $(K, \Phi)$ is simple if and only if its CM type is primitive. Since we are interested in constructing complex ppav that are simple, we will restrict our attention to primitive CM types.  Two CM types $\Phi_1$ and $\Phi_2$ are said to be \emph{equivalent} if there is an automorphism $\sigma$ of $K$ such that $\Phi_1 = \Phi_2 \sigma$. We have the following result:
\begin{proposition}[Streng {\cite[Lemmata I.5.4 and I.5.6]{streng}}]
Let $A_1$ and $A_2$ be abelian varieties over $\C$ with CM types $\Phi_1$ and $\Phi_2$, where $\Phi_1$ and $\Phi_2$ are primitive CM types for a common CM field $K$. If $A_1$ and $A_2$ are isomorphic, then the two CM types are equivalent.
Moreover, if two CM types $\Phi_1$ and $\Phi_2$ are equivalent, then the set of isomorphism classes of ppav with CM type $\Phi_1$ coincides with the set of isomorphism classes of ppav with CM type $\Phi_2$.
\end{proposition}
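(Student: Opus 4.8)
The plan is to treat the two assertions separately, using in both cases the principle that a CM type $\Phi$ records precisely the action of $K$ on the tangent space $T_0A \cong \C^g$, together with the fact recalled above that primitivity of $\Phi$ is equivalent to simplicity of $A$ and hence forces the endomorphism algebra $\End(A)\otimes\Q$ to be exactly $K$.

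For the first assertion I would begin by writing $A_i = \C^g/\Phi_i(\mathfrak{a}_i)$ and lifting the given isomorphism $A_1 \to A_2$ to a $\C$-linear automorphism $F$ of $\C^g$ carrying the lattice $\Phi_1(\mathfrak{a}_1)$ onto $\Phi_2(\mathfrak{a}_2)$. On each $A_i$ the ring $\O_K$ acts through $\Phi_i$ as the diagonal matrices $\Phi_i(\alpha)=\diag(\phi(\alpha))_{\phi\in\Phi_i}$, and since $\Phi_i$ is primitive, $A_i$ is simple and $\End(A_i)\otimes\Q = K$ is exhausted by this action. Conjugating by $F$ therefore produces a $\Q$-algebra automorphism $\sigma\in\Aut(K)$ characterized by $F\,\Phi_1(\alpha) = \Phi_2(\sigma\alpha)\,F$ for all $\alpha\in K$.

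The key step is then to read off the equality of CM types from this intertwining relation. Comparing the $(j,i)$ entries of $F\,\Phi_1(\alpha)$ and $\Phi_2(\sigma\alpha)\,F$ shows that $F_{ji}\bigl(\phi_i(\alpha)-\psi_j(\sigma\alpha)\bigr)=0$ for all $\alpha$; since $F$ is invertible, each row and column of $F$ contains a nonzero entry, which matches every $\psi_j\circ\sigma$ with some $\phi_i$ and conversely. As both $\Phi_1$ and $\{\psi\circ\sigma : \psi\in\Phi_2\}$ consist of $g$ distinct embeddings, the matching is a bijection and yields $\Phi_1 = \Phi_2\sigma$, i.e.\ the two types are equivalent. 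I expect this to be the main obstacle: one must ensure that the conjugation automorphism genuinely lands in $\Aut(K)$ rather than in the automorphism group of a larger endomorphism algebra, and it is precisely here that the primitivity hypothesis is indispensable.

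For the second assertion, suppose $\Phi_1 = \Phi_2\sigma$ with $\sigma\in\Aut(K)$. The observation to exploit is that $\Phi_1(x) = \Phi_2(\sigma x)$ for every $x\in K$, up to the fixed permutation of coordinates identifying the two orderings of the embeddings; hence for any $\O_K$-ideal $\mathfrak{a}$ the lattices $\Phi_1(\mathfrak{a})$ and $\Phi_2(\sigma\mathfrak{a})$ coincide, and $\sigma\mathfrak{a}$ is again an $\O_K$-ideal. It then remains to transport the polarization: a principal polarization of type $\Phi_1$ is given by a totally imaginary $\xi\in K$ with $\Im\phi(\xi)>0$ for all $\phi\in\Phi_1$, subject to the usual condition relating $(\xi)$, $\mathfrak{a}$ and the different. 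Setting $\xi' = \sigma\xi$ and using that $\sigma$ commutes with complex conjugation on $K$ (because it preserves the maximal totally real subfield), one checks that $\xi'$ satisfies the corresponding conditions relative to $\Phi_2$ and $\sigma\mathfrak{a}$. Thus $\mathfrak{a}\mapsto\sigma\mathfrak{a}$ furnishes a bijection between the two families of polarized data inducing literally isomorphic ppav, so the two sets of isomorphism classes coincide. This direction is essentially bookkeeping once the identity $\Phi_1(\mathfrak{a})=\Phi_2(\sigma\mathfrak{a})$ is in hand.
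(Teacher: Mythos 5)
Your proof is correct. The paper itself gives no argument for this proposition—it is quoted from Streng's thesis—and your two steps (lifting the isomorphism to a linear map $F$, using primitivity to get $\End(A_i)\otimes\Q = K$ so that conjugation by $F$ yields $\sigma \in \Aut(K)$ with $\Phi_1 = \Phi_2\sigma$ via the diagonal intertwining relation; then transporting the data $(\mathfrak{a},\xi)$ through $\sigma$ for the converse, using that $\sigma$ commutes with complex conjugation and fixes the different) constitute exactly the standard argument behind the cited Lemmata, so there is nothing to correct or to contrast.
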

Since we are interested in enumerating abelian varieties with CM by a certain field up to isomorphism, it suffices to consider only one CM type from each equivalence class of equivalent CM types. In our case of interest, $g=3$ and $K$ is a sextic CM field. There are thus four possible isomorphism classes of Galois groups $G$ of the Galois closure $L$ of $K$ over $\Q$: $\Z/6\Z$, $\Z/2\Z\times S_3$, $(\Z/2\Z)^3\rtimes \Z/3\Z$ or $(\Z/2\Z)^3\rtimes S_3$. (Note that Weng \cite{WengGenus3} has a typo in the orientation of the symbol $\rtimes$.) Weng determines primitive CM types for each isomorphism class of $G$, and with this it is straightforward to determine the equivalence classes of equivalent CM types:
\begin{proposition}
	Let $K$ be a CM sextic field.
	\begin{enumerate}
		\item \label{galoiscase} If $G \cong \Z/6\Z$, $K$ has six primitive CM types, and they are all equivalent.
		\item If  $G \cong \Z/2\Z\times S_3$, $K$ has six primitive CM types, and three equivalence classes of equivalent primitive CM types.
		\item If $G \cong (\Z/2\Z)^3\rtimes \Z/3\Z$ or $G \cong (\Z/2\Z)^3\rtimes S_3$, $K$ has eight primitive CM types, and four equivalence classes of equivalent primitive CM types.
	\end{enumerate}
\end{proposition}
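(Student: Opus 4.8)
The plan is to translate everything into the combinatorics of the group $G$ and its central element $\rho$ given by complex conjugation, and then reduce the two counts (primitive CM types, and equivalence classes) to two group-theoretic quantities that depend only on the isomorphism type of $G$. Recall that since $K$ is CM, complex conjugation is a central involution $\rho\in G$ with $\rho\notin H$, and the six embeddings of $K$ correspond to the left cosets $G/H$, paired by $gH\leftrightarrow\rho gH$. A CM type $\Phi$ is then the same datum as a subset $S\subset G$ that is a union of exactly one coset from each of the three $\rho$-pairs, equivalently $G=S\sqcup\rho S$; there are $2^3=8$ of these. In this language $H\subseteq H'$ always, primitivity is the condition $H'=H$, and two primitive CM types are equivalent exactly when their sets $S$ lie in the same orbit under right multiplication by $N_G(H)$, i.e. under the natural right action of $\Aut(K)\cong N_G(H)/H$.

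First I would record two reductions valid in all four cases. (1) The group $\Aut(K)$ acts freely on the primitive CM types: the stabilizer of $\Phi$ is $(H'\cap N_G(H))/H$, which is $H/H$ as soon as $H'=H$; moreover right multiplication by $N_G(H)$ preserves primitivity, since conjugating the relation $Sn\gamma=Sn$ gives $H'(\Phi n)=n^{-1}H'(\Phi)n$. Hence the number of equivalence classes is exactly (number of primitive CM types)$/|\Aut(K)|$. (2) A CM type is imprimitive precisely when it is induced from a proper CM subfield of $K$; since a subfield has degree dividing $6$ and a CM field has even degree, the only proper CM subfields are imaginary quadratic. Each imaginary quadratic $F\subset K$ has two CM types, inducing two distinct imprimitive CM types of $K$, and distinct $F$ give distinct induced types. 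Therefore the number of primitive CM types equals $8-2r$, where $r$ is the number of imaginary quadratic subfields of $K$.

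With these reductions the proposition becomes two group-theoretic computations for each group. I would identify $\rho$ as the central involution of $G$ and $H$, up to conjugacy, as the subgroup of order $|G|/6$ with trivial core (so that the Galois closure of $K=L^H$ is indeed $L$), not containing $\rho$ (so $K$ is totally imaginary), with $\langle H,\rho\rangle$ of index $3$ (cutting out the totally real cubic $K^+$). Then $r$ is read off from the index-$2$ subgroups of $G$ containing $H$: such a subgroup cuts out a quadratic subfield of $K$, imaginary exactly when it avoids $\rho$. For $G\cong\Z/6\Z$ one has $H=1$, $K$ is Galois with $\Aut(K)=\Z/6\Z$, and the unique index-$2$ subgroup omits $\rho$, so $r=1$, giving $6$ primitive types and $6/6=1$ class. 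For $G\cong\Z/2\Z\times S_3$ one finds $N_G(H)/H\cong\Z/2\Z$ generated by $\rho$, a single index-$2$ subgroup contains $H$ and it omits $\rho$, so $r=1$, giving $6$ primitive types and $6/2=3$ classes. For $G\cong(\Z/2\Z)^3\rtimes\Z/3\Z$ and $G\cong(\Z/2\Z)^3\rtimes S_3$, taking $H$ to be a coordinate hyperplane of the normal subgroup $V=(\Z/2\Z)^3$ (in the second case extended by the transposition fixing that coordinate), one computes $N_G(H)=\langle H,\rho\rangle$, so $\Aut(K)\cong\Z/2\Z$, and checks that the imaginary index-$2$ subgroups of $G$ do not contain $H$; hence $r=0$, giving $8$ primitive types and $8/2=4$ classes.

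The main obstacle is the correct identification of $H$ inside each $G$ from the CM data, and in particular the verification in the last two cases that the imaginary quadratic index-$2$ subgroups of $G$ fail to contain $H$: this noncontainment is exactly what forces $r=0$ and hence the primitivity of all eight CM types. One must check that the constraints (trivial core, $\rho\notin H$, and $\langle H,\rho\rangle$ of index $3$ and totally real) pin down $H$ up to $G$-conjugacy, so that $|\Aut(K)|=|N_G(H)/H|$ and $r$ genuinely depend only on the isomorphism type of $G$; once $H$ is fixed, computing $N_G(H)$ and locating the index-$2$ subgroups is routine, and the free-action reduction immediately yields the stated class numbers. One may alternatively bypass the general reductions and enumerate the eight subsets $S$ together with the action of $\Aut(K)$ by hand in each case, which for $G\cong\Z/6\Z$ already exhibits the six primitive types as a single $\Z/6\Z$-orbit.
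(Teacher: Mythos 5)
Your proposal is correct, but it takes a genuinely different and more self-contained route than the paper. The paper's proof is essentially a citation: the count of primitive CM types in each case is quoted from Weng's Lemma 3.1, the equivalence of all six primitive types in the cyclic case is quoted from Weng's Theorem 3.5, and the remaining cases are dispatched by the observation that $\Aut(K)$ consists only of the identity and complex conjugation, so each primitive type is equivalent precisely to its conjugate. You instead re-derive both counts from the group data $(G,H,\rho)$: the number of primitive CM types is $8-2r$, with $r$ the number of imaginary quadratic subfields (using the standard fact that an imprimitive CM type of a sextic field is induced from a unique imaginary quadratic subfield), and the number of equivalence classes is the primitive count divided by $\#\Aut(K)=\#\bigl(N_G(H)/H\bigr)$, justified by your freeness argument --- which, pleasantly, also handles the cyclic case uniformly, replacing the citation of Weng's Theorem 3.5 (there the action of $\Aut(K)\cong\Z/6\Z$ is free on the six primitive types, forcing a single orbit). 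What your approach buys is independence from Weng's computations and a single mechanism for all four Galois groups; what it costs is the bookkeeping you flag at the end, and there is one imprecision there: your constraints pin down $H$ only up to automorphisms of $G$ fixing $\rho$, not up to $G$-conjugacy. For instance, in $G\cong\Z/2\Z\times S_3$ both the subgroup generated by a transposition $\tau$ and the one generated by $\rho\tau$ satisfy all your constraints and are \emph{not} conjugate (since $\rho$ is central, the $\Z/2\Z$-coordinate is a conjugacy invariant); a similar splitting into two conjugacy classes occurs for $(\Z/2\Z)^3\rtimes S_3$. These classes are exchanged by an automorphism of $G$ fixing $\rho$, so $r$ and $N_G(H)/H$ come out the same and your counts survive, but the verification should be phrased as uniqueness up to automorphism of the pair $(G,\rho)$, or else carried out separately for each conjugacy class of admissible $H$.
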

\begin{proof}
	In any case, $K$ has $2^3 =8$ CM types. For each case, the number and characterization of primitive CM types follow from Weng's work \cite[Lemma 3.1]{WengGenus3}.
	In part \ref{galoiscase}, the fact that all primitive CM types are equivalent follows again from Weng \cite[Theorem 3.5]{WengGenus3}.
	For the other parts, we use the fact that $\Aut(K)$ contains only the identity and complex multiplication. Therefore, a primitive CM type is only equivalent to its complex conjugate.
\end{proof}

With these results giving us a complete list of equivalence classes of equivalent primitive CM types for a given field $K$, we can apply Koike and Weng's \cite{koikeweng} algorithm to enumerate data for all isomorphism classes of simple ppav with CM by $K$. We briefly recall this method in Algorithm~\ref{alg:periodmatrix}, presented in Appendix \ref{periodmatrixalgorithm}, and refer the reader to \cite{koikeweng} for full details.

\section{The map $\eta$ attached to a hyperelliptic period matrix}\label{mapseta}
Given a hyperelliptic period matrix $Z$, Mumford \cite{Mumford1} constructs a certain map $\eta$. This map is crucial to the understanding of hyperelliptic Jacobians: First, its values control the vanishing of certain theta functions in such a way that the hyperelliptic Jacobians can be characterized by this vanishing property. Secondly, knowledge of a map $\eta$ attached to a period matrix allows one to recover a model for the hyperelliptic curve. These two phenomena are explained in $\S$\ref{vanishing}. Here we begin by describing the set of such maps $\eta$ that arise from hyperelliptic Jacobians and showing how to construct these maps given a hyperelliptic period matrix.

Throughout this section, we will take the convention that if $x \in \C^{2g}$, then $x = (x_1, x_2)$, with $x_i \in \C^g$; in other words $x_1$ will denote the vector of the first $g$ entries of $x$, and $x_2$ will denote the vector of the last $g$ entries of $x$. For a vector $x$, we will write $x^T$ for the transpose, and whenever matrix multiplication is involved, $x$ is taken to be a column vector.

\subsection{Eta maps}
Throughout, we let $B = \{1, 2, \ldots, 2g+1, \infty\}$. For any two subsets $S_1, S_2 \subseteq B$, we define
\begin{equation*}
S_1 \circ S_2 = (S_1 \cup S_2) - (S_1 \cap S_2),
\end{equation*}
the symmetric difference of the two sets. For $S \subseteq B$ we also define $S^c = B - S$, the complement of $S$ in $B$. Then we have that the set
\begin{equation*}
\{S \subseteq B : \# S \equiv 0 \pmod{2} \} / \{S \sim S^c\}
\end{equation*}
is a commutative group under the operation $\circ$, of order $2^{2g}$, with identity $\emptyset \sim B$. Since $S \circ S = \emptyset$ for all $S \subseteq B$, this is a group of exponent $2$. Therefore this group, which we denote $G_B$, is isomorphic to $(\Z/2\Z)^{2g}$.

We also need some functions on elements of $(1/2)\Z^{2g}$. Given $\xi \in (1/2)\Z^{2g}$, we continue to write $\xi = (\xi_1, \xi_2)$, as explained at the beginning of this section.

\begin{definition}\label{def:estar}
	For $\xi \in (1/2)\Z^{2g}$, let $e_*(\xi) = \exp(4\pi i \xi_1^T  \xi_2).$
\end{definition}

\begin{definition}
	For $\xi, \zeta \in (1/2)\Z^{2g}$, let $e_2(\xi,\zeta) = \exp(4\pi i \xi^T J\zeta)$, with $J = \left(\begin{smallmatrix} 0 & \mathds{1}_g \\ - \mathds{1}_g & 0 \end{smallmatrix}\right)$.
\end{definition}

We note that these two functions are related by the following formula: For $\xi_i \in (1/2)\Z^{2g}$,
\begin{equation*}
e_*\left(\sum_{i=1}^k \xi_i\right) = \prod_{i<j} e_2(\xi_i,\xi_j) \prod_{i=1}^k e_*(\xi_i).
\end{equation*}

We are now ready to define the set of maps of interest.

\begin{definition}\label{def:Xi}
	Following Poor \cite{poor}, we define the set $\Xi_g$ to contain the maps $\eta \colon P(B) \to (1/2)\mathbb{Z}^{2g}$, where $P(B)$ is the power set of $B$, satisfying the following properties:
	\begin{enumerate}
		\item $\eta(\{\infty\}) = 0$.
		\item \label{property:sum} For all $S \subseteq B$, $\eta(S) = \sum_{i \in S} \eta(\{i\})$.
		\item For all $S \subseteq B$, $\eta(S) = \eta(S^c)$ and the induced map $\eta \colon G_B \cong (1/2) \mathbb{Z}^{2g}/\mathbb{Z}^{2g}$ is a group isomorphism.
		\item For all sets $S_1$ and $S_2$ such that $\# S_1$, $\# S_2 \equiv 0 \pmod{2}$, $e_2(\eta(S_1),\eta(S_2)) = (-1)^{\#(S_1 \cap S_2)}$.
		\item \label{property:parity} There is $U_{\eta} \subset B$ such that $\# U_{\eta} \equiv g+1 \pmod{4}$ and for all $S$ such that $\# S \equiv 0 \pmod{2}$, we have $e_*(\eta(S)) = (-1)^{(g+1-\#(S \circ U_{\eta}))/2}$.
	\end{enumerate}
\end{definition}

\begin{remark}As noted in the proof of Lemma 1.4.13 of \cite{poor}, the set $U_{\eta}$ is none other than $\{i \in B - \{\infty\} : e_*(\eta(\{i\})) = -1 \} \cup \{\infty \}$.
\end{remark}

For ease of notation, for any map $\eta \in \Xi_g$, we will henceforth denote
\begin{equation*}\eta_\infty = \eta(\{\infty\})=0, \eta_1 = \eta(\{1\}), \ldots, \eta_{2g+1} = \eta(\{2g+1\}),\end{equation*}
and for any set $S \subset B$, we write $\eta_S = \eta(S).$

\begin{definition} Two maps $\eta$ and $\theta$ in $\Xi_g$ are said to be in the same \emph{class} if they are equal as maps to $(1/2) \mathbb{Z}^{2g}/\mathbb{Z}^{2g}$.
\end{definition}

Because of property \ref{property:sum} in Definition \ref{def:Xi}, any map $\eta \in \Xi_g$ is determined by its values $\eta_1, \ldots \eta_{2g+1}$. We have the following converse:

\begin{proposition}[Poor {\cite[Lemma 1.4.13]{poor}}]\label{prop:asygetic}
	Any ordered tuple $(\alpha_i)$ of $2g+1$ vectors in $(1/2)\mathbb{Z}^{2g}/\Z^{2g}$ gives rise to a class of maps $\eta \in \Xi_g$ via $\eta_i \equiv \alpha_i \pmod{\Z^{2g}}$ and $\eta_S = \sum_{i \in S} \eta_i$ if it satisfies the following conditions:
	\begin{enumerate}
		\item The $\alpha_i$'s span $(1/2)\mathbb{Z}^{2g}/\Z^{2g}$ as an $\mathbb{F}_2$-vector space.
		\item $\sum_{i=1}^{2g+1} \alpha_i = 0$.
		\item \label{property:e2} $e_2(\alpha_i,\alpha_j) = -1$ for each pair $i \neq j$.
	\end{enumerate}
	In fact, there is a bijection between the set of such tuples $(\alpha_i)$ and the classes of maps in $\Xi_g$.
\end{proposition}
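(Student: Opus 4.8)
The plan is to translate everything into the language of quadratic forms over $\F_2$ and then verify the five defining conditions of $\Xi_g$ in turn. Set $V = (1/2)\Z^{2g}/\Z^{2g} \cong \F_2^{2g}$. First I would check that $e_2$ and $e_*$ descend to $V$: shifting an argument by a vector in $\Z^{2g}$ changes the exponent $4\pi i\,\xi^T J\zeta$, resp. $4\pi i\,\xi_1^T\xi_2$, by an element of $2\pi i\Z$. Writing $\langle\xi,\zeta\rangle\in\F_2$ for the quantity with $e_2(\xi,\zeta)=(-1)^{\langle\xi,\zeta\rangle}$ and $q(\xi)\in\F_2$ for the one with $e_*(\xi)=(-1)^{q(\xi)}$, a direct computation identifies $\langle\cdot,\cdot\rangle$ with the standard symplectic form and $q$ with the standard ($\operatorname{Arf}$-zero) quadratic refinement, so that $q(\xi+\zeta)=q(\xi)+q(\zeta)+\langle\xi,\zeta\rangle$. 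In this language condition (3) says $\langle\alpha_i,\alpha_j\rangle=1$ for $i\neq j$, i.e. the $\alpha_i$ form an \emph{azygetic} system.

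Given such a tuple, define $\eta$ by $\eta_i=\alpha_i$, $\eta_\infty=0$, and $\eta_S=\sum_{i\in S}\eta_i$; properties (1) and (2) of Definition~\ref{def:Xi} hold by construction. For (3), the relation $\eta(S)+\eta(S^c)=\eta(B)=\sum_i\alpha_i=0$ (condition (2)) gives $\eta(S)=\eta(S^c)$, and since $2\eta_i\in\Z^{2g}$ the difference $\eta_{S_1\circ S_2}-\eta_{S_1}-\eta_{S_2}=2\sum_{i\in S_1\cap S_2}\eta_i$ lies in $\Z^{2g}$, so $[S]\mapsto\eta_S$ is a well-defined homomorphism $G_B\to V$; it is surjective because each $\alpha_i=\eta(\{i,\infty\})$ is in the image and the $\alpha_i$ span $V$ (condition (1)), hence an isomorphism by equality of orders $2^{2g}$. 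Property (4) follows from bilinearity of $e_2$ together with $e_2(\eta_i,\eta_i)=1$ and $e_2(0,\cdot)=1$: expanding $e_2(\eta_{S_1},\eta_{S_2})=\prod_{i\in S_1}\prod_{j\in S_2}e_2(\eta_i,\eta_j)$, the factors $-1$ are exactly those with $i\neq j$ and neither index $\infty$, and since $S_1,S_2$ are even one checks $\#S_1'\,\#S_2'-\#(S_1'\cap S_2')\equiv\#(S_1\cap S_2)\pmod 2$ (writing $S_i'=S_i\setminus\{\infty\}$), so the cases distinguished by $\infty\in S_i$ all collapse.

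For (5) I take $U_\eta=\{i\neq\infty:e_*(\alpha_i)=-1\}\cup\{\infty\}$, as in the Remark. Using the product formula relating $e_*$ and $e_2$, for even $S$ one gets $e_*(\eta_S)=(-1)^{\binom{\#S'}{2}+\#(S\cap(U_\eta\setminus\{\infty\}))}$, and a manipulation of binomial coefficients modulo $2$ shows this equals $(-1)^{(g+1-\#(S\circ U_\eta))/2}$ \emph{precisely when} $\#U_\eta\equiv g+1\pmod4$; conversely, demanding the formula for all even $S$ forces exactly this congruence. Establishing $\#U_\eta\equiv g+1\pmod4$ is the crux. Applying the product formula to $\sum_i\alpha_i=0$ only yields the weaker $\#U_\eta\equiv g+1\pmod2$, so the hard part is the mod-$4$ refinement. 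I would obtain it by proving, by induction on $g$, the sharper statement that for \emph{any} azygetic spanning zero-sum system and any quadratic refinement $q'$ one has $\#\{i:q'(\alpha_i)=1\}\equiv g+2\operatorname{Arf}(q')\pmod4$ (base case $g=0$ immediate). The inductive step peels off the hyperbolic plane $P=\langle\alpha_{2g},\alpha_{2g+1}\rangle$, nondegenerate since $\langle\alpha_{2g},\alpha_{2g+1}\rangle=1$, and projects the rest into $P^\perp\cong\F_2^{2g-2}$, where $\bar\alpha_i=\alpha_i+\alpha_{2g}+\alpha_{2g+1}$ is again azygetic, spanning, and zero-sum; tracking how the values $q'(\bar\alpha_i)$ and $\operatorname{Arf}(q'|_{P^\perp})=\operatorname{Arf}(q')+q'(\alpha_{2g})q'(\alpha_{2g+1})$ change settles every case. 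As our $q$ is $\operatorname{Arf}$-zero, this gives $\#\{i:q(\alpha_i)=1\}\equiv g\pmod4$, hence $\#U_\eta\equiv g+1\pmod4$.

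Finally, the correspondence $(\alpha_i)\mapsto[\eta]$ is a bijection: it is injective since $[\eta]$ is determined by, and determines, the $\eta_i=\alpha_i\bmod\Z^{2g}$; and it is surjective since, for any $\eta\in\Xi_g$, the tuple $(\eta_i)$ satisfies (1)–(3)—condition (2) from $\eta(B)=\eta(\emptyset)=0$, condition (3) from property (4) applied to the even sets $\{i,\infty\}$ and $\{j,\infty\}$, and condition (1) from the surjectivity of the isomorphism in property (3), using that the classes $[\{i,\infty\}]$ generate $G_B$. I expect the mod-$4$ count in property (5) to be the only genuine obstacle; all the remaining steps are bookkeeping with the bilinear and quadratic identities.
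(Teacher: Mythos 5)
Your proof is correct, but there is nothing internal to compare it against: the paper states this proposition purely as a citation of Poor's Lemma 1.4.13 and supplies no proof of its own. Judged as a self-contained substitute for that citation, your argument takes a genuinely different route from the one the cited literature (Poor, following Igusa) takes, which transports Mumford's explicit system $\tilde{\eta}$ using the transitive action of $\Sp_{2g}(\F_2)$ on azygetic tuples --- an argument that is delicate precisely because $e_*$ is only $\Gamma_{1,2}$-invariant, not $\Sp_{2g}(\F_2)$-invariant, so property (5) of Definition \ref{def:Xi} cannot be transported naively; your direct verification avoids this issue entirely. The routine parts are all in order: the descent of $e_2$ and $e_*$ to $\F_2^{2g}$, the well-definedness and surjectivity of the induced map on $G_B$, the parity identity $\#S_1'\,\#S_2'-\#(S_1'\cap S_2')\equiv\#(S_1\cap S_2)\pmod 2$ behind property (4), and both directions of the bijection. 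You also correctly isolate the one substantive point --- the congruence $\#U_\eta\equiv g+1\pmod 4$, which the zero-sum relation alone yields only modulo $2$ --- and your strengthened inductive statement $\#\{i:q'(\alpha_i)=1\}\equiv g+2\operatorname{Arf}(q')\pmod 4$ for an \emph{arbitrary} quadratic refinement $q'$ does close the induction: I checked the three cases of the inductive step (the case $q'(\alpha_{2g})+q'(\alpha_{2g+1})=1$, where the values $q'(\bar\alpha_i)=q'(\alpha_i)$ and the Arf invariant are unchanged; the case $q'(\alpha_{2g})=q'(\alpha_{2g+1})=0$, where all $2g-1$ values flip and the Arf invariant is unchanged; and the case $q'(\alpha_{2g})=q'(\alpha_{2g+1})=1$, where all values flip and the Arf invariant changes), and each returns the stated congruence, using $-2\operatorname{Arf}(q')\equiv 2\operatorname{Arf}(q')\pmod 4$. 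Allowing arbitrary Arf invariant in the hypothesis is exactly what makes the hyperbolic-plane-peeling work, since $q'|_{P^\perp}$ need not have Arf invariant zero even when $q'$ does. The only thing I would ask you to write out in a final version is that three-case check, which you currently compress into ``settles every case,'' since it is the heart of the argument.
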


We note that an ordered tuple satisfying these three conditions is commonly called an \emph{asygetic basis} in the literature. We will avoid this technical term and simply speak of the values $\eta_1, \ldots \eta_{2g+1}$ of a given  class of maps $\eta$, where $\eta$ is understood to be any representative of the class, and the entries of the values $\eta_i$ have all been reduced modulo $\Z$.

\subsection{Associating a map $\eta$ to a hyperelliptic period matrix}

In this section, let $X$ be a smooth complete hyperelliptic curve of genus $g$ defined over $\C$. As explained in the literature, for example \cite{Birkenhake}, a choice of period matrix $Z \in \mathcal{H}_g$ is equivalent to a choice of symplectic basis, $A_i$, $B_i$, for the homology group $H_1(X,\Z)$ of the curve. Indeed, without any further choice, there exists a unique basis $\omega_i$ of holomorphic differentials on $X$ such that
\begin{equation*}
\int_{A_i} \omega_i = 1 \quad \text{and} \quad \int_{A_i} \omega_j = 0, \, i \neq j.
\end{equation*}
Then $Z$ is the matrix given by $\int_{B_i} \omega_j$. Conversely, any period matrix is obtained in this manner.

Still without any further choices, we can obtain an Abel-Jacobi map
\begin{equation*}
AJ \colon \Jac(X)  \to \C^g/L_Z, \qquad
\sum_{k=1}^s P_k - \sum_{k=1}^s Q_k  \mapsto \left(\sum_{k=1}^s \int_{Q_k}^{P_k} \omega_i\right)_i,
\end{equation*}
which is well-defined since the value of each path integral on $X$ is well-defined up to the value of integrating the differentials $\omega_i$ along the basis elements $A_i$, $B_i$, and thus  up to elements of $L_Z$.

We can further choose to label the $2g+2$ branch points of the hyperelliptic map $\pi \colon X \to \mathbb{P}^1$, $P_1, P_2, \ldots, P_{2g+1}, P_{\infty}$. Given this second choice, we can give a group isomorphism (see \cite[Corollary 2.11]{Mumford} for details) between the $2$-torsion of the Jacobian of $X$ and the group $G_B$ in the following manner: To each set $S \subseteq B$ such that $\# S \equiv 0 \pmod{2}$, associate the divisor class of the divisor
\begin{equation}\label{eq:2torsion}
e_S = \sum_{i \in S} P_i - (\#S) P_{\infty}.
\end{equation}

In turn, this isomorphism gives rise to a class of maps $\eta \in \Xi_g$ by sending $S \subseteq B$ to the unique vector $\eta_S$ in $(1/2)\mathbb{Z}^{2g}/\Z^{2g}$ such that $AJ(e_S)=(\eta_S)_2 + Z (\eta_S)_1$. (The fact that $\eta \in \Xi_g$ is shown in \cite[Proposition 1.4.9]{poor}.) Since there are $(2g+2)!$ different ways to label the $2g+2$ branch points of a hyperelliptic curve $X$ of genus $g$, there are several ways to assign a class in $\Xi_g$ to a matrix $Z \in \mathcal{H}_g$.

We give here a diagram to illustrate the maps described above:
\begin{center}
\begin{tikzpicture}[
            point/.style={circle, fill=white, minimum size=0.1pt, inner sep=0pt},
            mu1/.style={black},
        ]
        \node[point] (A00) at (-1,1) {$G_B$};
        \node[point] (A01) at (5,1) {$\Jac(X)[2](\C)$};
        \node[point] (A02) at (10,1) {$(1/2)\mathbb{Z}^{2g}/\Z^{2g}$};
        \node[point] (A10) at (-1,0) {$S$};
        \node[point] (A11) at (5,0) {$e_S$};
        \node[point] (A12) at (10,0) {$\eta_S$};
        \draw[mu1,->] (-0.5,1) -- (3.7,1) node[pos=0.5,above] {\small labeling of branch points};
        \draw[mu1,->] (6.3,1) -- (8.7,1) node[pos=0.5,above] {\small period matrix};
        \draw[mu1,|->] (-0.5,0) -- (3.7,0);
        \draw[mu1,|->] (6.3,0) -- (8.7,0);
\end{tikzpicture}
\end{center}
\vspace{0.5 cm}
where above each map we have noted the choice made to give the map. The resulting class of maps $\eta$ is given by the composition of these two isomorphisms.

\begin{definition}
	We say that the class of the map $\eta \in \Xi_g$ is associated to the period matrix $Z \in \mathcal{H}_g$ if there is a labeling of the branch points of the hyperelliptic map such that for all $S\subseteq B$ with $\# S \equiv 0 \pmod{2}$, we have $AJ(e_S)=(\eta_S)_2 + Z (\eta_S)_1$, where the Abel-Jacobi map is defined by the symplectic basis used to compute the period matrix $Z$.
\end{definition}

\begin{example}[Mumford]\label{MumfordAzygeticSystem}
	In \cite[Chapter 5]{Mumford}, Mumford chooses an explicit symplectic basis for homology and computes the associated class in $\Xi_g$. For the convenience of the reader, a drawing of his basis is given in Appendix \ref{pathpicture}. He obtains the values
	\begin{gather*}
	\tilde{\eta}_{2i-1} = \bigg( \begin{matrix} 0 \ldots 0 \smalloverbrace{\frac{1}{2}}^{i} 0 \ldots 0 \smallunderbrace{\frac{1}{2}}_{g+1} \dfrac{1}{2} \ldots \dfrac{1}{2} \smallunderbrace{0}_{g + i} 0 \ldots 0 \end{matrix} \bigg) \quad \text{for $i = 1, \ldots g+1$ and}\\
	\tilde{\eta}_{2i} = \bigg( \begin{matrix} 0 \ldots 0 \smalloverbrace{\frac{1}{2}}^{i} 0 \ldots 0 \smallunderbrace{\frac{1}{2}}_{g+1} \dfrac{1}{2} \ldots \dfrac{1}{2} \smallunderbrace{\frac{1}{2}}_{g + i} 0 \ldots 0 \end{matrix} \bigg) \quad \text{for $i = 1, \ldots g$.}
	\end{gather*}
	 One can show using Proposition \ref{prop:asygetic} that this indeed gives rise to a class of maps in $\Xi_g$, which we denote by $\tilde{\eta}$ throughout the paper.
	
	The set $U_{\tilde{\eta}}$ associated to this map is $\{2, 4, \ldots 2g, \infty\}$.
	He also computes
    \begin{equation*}    \tilde{\eta}_{U_{\tilde{\eta}}} = \bigg( \begin{matrix} \dfrac{1}{2}   \ldots  \dfrac{1}{2} & \dfrac{g}{2} & \dfrac{g-1}{2} \ldots \dfrac{3}{2} & 1 & \dfrac{1}{2}  \end{matrix} \bigg).
    \end{equation*}
\end{example}

\vspace*{0.7 cm}
We denote by $\Sp_{2g}(\Z)$ the group of $2g \times 2g$ matrices symplectic with respect to the bilinear form $A(x,y) = x_1^T  y_2 - x_2^T  y_1$ and with coefficients in $\mathbb{Z}$.
There is an action of the group $\Sp_{2g}(\mathbb{Z})$ on the set $\Xi_g$ given by matrix multiplication on the left on the codomain of a map $\eta$.  We also define
\begin{equation*}
\Gamma_2 = \{\gamma \in \Sp_{2g}(\Z) : \gamma \equiv \mathds{1}_{2g} \pmod{2}\},
\end{equation*}
the principal congruence subgroup of level $2$.

We have the following:
\begin{proposition}[Igusa {\cite[Chapter V, Section 6]{igusa}}]\label{prop:igusa}
	The quotient group $\Sp_{2g}(\mathbb{Z})/\Gamma_2$ acts freely and transitively on the classes in $\Xi_g$.
\end{proposition}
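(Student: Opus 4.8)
The plan is to transport the statement to the language of asygetic bases via the bijection of Proposition~\ref{prop:asygetic}, where the group action becomes transparent, and then prove that the action is \emph{simply} transitive. Write $V = (1/2)\Z^{2g}/\Z^{2g} \cong \F_2^{2g}$; a class in $\Xi_g$ is the same datum as an ordered tuple $(\alpha_1, \ldots, \alpha_{2g+1})$ in $V$ satisfying the three conditions of Proposition~\ref{prop:asygetic}: the $\alpha_i$ span $V$, $\sum_i \alpha_i = 0$, and $e_2(\alpha_i,\alpha_j) = -1$ for $i \ne j$. The left action of $\gamma \in \Sp_{2g}(\Z)$ sends $\eta_i \mapsto \gamma\eta_i$; since $\alpha_i \in V$, this depends only on $\gamma \bmod 2$, and I would first record that it preserves the three asygetic conditions. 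Spanning and $\sum_i \gamma\alpha_i = \gamma\cdot 0 = 0$ are immediate from invertibility of $\gamma$, while $e_2(\gamma\alpha_i,\gamma\alpha_j) = e_2(\alpha_i,\alpha_j)$ holds because $e_2(\xi,\zeta) = \exp(4\pi i\,\xi^T J\zeta)$ is exactly $(-1)^{\langle\xi,\zeta\rangle}$ for the nondegenerate alternating $\F_2$-form $\langle\cdot,\cdot\rangle$ attached to $J$, which $\gamma$ preserves by definition of $\Sp_{2g}$. Thus $\Sp_{2g}(\Z)$ permutes asygetic bases, and $\Gamma_2$ acts trivially: if $\gamma \equiv \mathds{1}_{2g}\pmod 2$ then $(\gamma - \mathds{1}_{2g})v \in \Z^{2g}$ for every $v \in (1/2)\Z^{2g}$, so $\gamma$ fixes every class. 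Hence the action descends to $\Sp_{2g}(\Z)/\Gamma_2$, which I identify with $\Sp_{2g}(\F_2)$ through the standard surjective reduction-mod-$2$ map whose kernel is precisely $\Gamma_2$.

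The heart of the argument is an observation I would isolate as a lemma: in any asygetic basis, the first $2g$ vectors $\alpha_1, \ldots, \alpha_{2g}$ already form an $\F_2$-basis of $V$. Indeed, the $2g+1$ vectors span the $2g$-dimensional space $V$, so their space of linear relations is one-dimensional; since $\sum_i \alpha_i = 0$ is a nonzero relation it spans all relations, and therefore no proper nonempty subfamily of the $\alpha_i$ is dependent. In particular any $2g$ of them form a basis.

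With this in hand, simple transitivity follows from a direct construction. Given two asygetic bases $(\alpha_i)$ and $(\beta_i)$, let $\gamma$ be the unique $\F_2$-linear automorphism of $V$ with $\gamma\alpha_i = \beta_i$ for $i = 1, \ldots, 2g$, which exists because both families are bases. Since $\sum_i \alpha_i = \sum_i \beta_i = 0$, the map $\gamma$ automatically sends $\alpha_{2g+1}$ to $\beta_{2g+1}$, so it carries one full tuple to the other. To see $\gamma \in \Sp_{2g}(\F_2)$, I compute $\langle\gamma\alpha_i,\gamma\alpha_j\rangle = \langle\beta_i,\beta_j\rangle = \langle\alpha_i,\alpha_j\rangle$ for all $i,j \le 2g$: for $i \ne j$ both sides equal $1$, the $\F_2$-value encoded by $e_2 = -1$, while for $i = j$ both vanish because $\langle\cdot,\cdot\rangle$ is alternating ($e_2(v,v) = 1$ since $v^T J v = 0$). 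A linear map preserving a nondegenerate bilinear form on a basis preserves it everywhere, so $\gamma$ is symplectic, proving transitivity. Freeness is the uniqueness half of the same statement: if $\gamma$ fixes an asygetic basis it fixes $\alpha_1,\ldots,\alpha_{2g}$, hence $\gamma = \mathds{1}$ in $\Sp_{2g}(\F_2)$. Equivalently, for a fixed asygetic basis the orbit map $\Sp_{2g}(\F_2) \to \{\text{asygetic bases}\}$ is a bijection, i.e. the action is free and transitive.

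The point requiring the most care is the compatibility between the $\{\pm1\}$-valued pairing $e_2$ and a genuine $\F_2$-symplectic form: I must verify that $e_2$ descends to $V$ and is alternating and nondegenerate, so that ``preserves $e_2$ on a basis'' really upgrades to ``symplectic.'' A second subtlety, which I would flag explicitly, is that the three asygetic conditions mention neither $e_*$ nor the parity set $U_\eta$ of Definition~\ref{def:Xi}, yet they are equivalent to membership in $\Xi_g$; this is exactly the content of Proposition~\ref{prop:asygetic}, and it is what lets me ignore property~\ref{property:parity} throughout. Once the reduction to asygetic bases is justified, the remaining steps are elementary linear algebra over $\F_2$.
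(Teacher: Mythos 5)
Your argument is correct, and it takes a genuinely different route from the paper: the paper does not really prove the statement at all, but reduces it to the observation that $\Sp_{2g}(\Z)/\Gamma_2 \cong \Sp_{2g}(\F_2)$ is precisely the symplectic group of the induced $\F_2$-form $e(x,y)=(-1)^{x_1^T y_2 - x_2^T y_1}$ and then defers the free-and-transitive claim to Igusa \cite{igusa}. You perform the same identification (including the standard fact, assumed without proof in both treatments, that reduction mod $2$ maps $\Sp_{2g}(\Z)$ onto $\Sp_{2g}(\F_2)$ with kernel exactly $\Gamma_2$), but then you replace the citation with a self-contained argument through Proposition~\ref{prop:asygetic}: your key lemma, that the relation space of an asygetic tuple is one-dimensional and spanned by $\sum_i \alpha_i = 0$, so that any $2g$ of the $2g+1$ vectors form an $\F_2$-basis, reduces both transitivity and freeness to linear algebra, with asygeticity ($e_2 = -1$ off the diagonal) together with the alternating property of the form guaranteeing that the unique linear map matching two tuples on those bases is symplectic. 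What your route buys is a proof readable without Igusa's book, and an explicit explanation of why the parity datum $U_\eta$ and property~\ref{property:parity} of Definition~\ref{def:Xi} can be ignored throughout, namely because Poor's bijection in Proposition~\ref{prop:asygetic} already encodes them; what the paper's route buys is brevity and an authoritative classical reference. The only ingredients you use without proof --- surjectivity of reduction mod $2$ and the bijection of Proposition~\ref{prop:asygetic} --- are exactly the kind of standard inputs the paper itself takes for granted, so I see no gap in your proposal.
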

\begin{proof}	Using the isomorphism $(1/2)\mathbb{Z}^{2g}/\Z^{2g} \cong \mathbb{F}_2^{2g}$, then $\Sp_{2g}(\mathbb{Z})/\Gamma_2 \cong \Sp_{2g}(\mathbb{F}_2)$ is exactly the group of symplectic matrices for the non-degenerate bilinear mapping given by $e(x,y) = (-1)^{x_1^T  y_2 - x_2^T  y_1}$ for $x, y \in \mathbb{F}_2^{2g}$.
\end{proof}
Thanks to this action, we need only one example of a class of maps $\eta \in \Xi_g$ to obtain a representative from each class, which is given to us by Mumford's explicit computation.

\section{The Vanishing Criterion and Thomae's formulae}\label{vanishing}

\subsection{Theta functions, theta characteristics, and theta constants}

For $\omega \in \C^g$ and $Z \in \mathcal{H}_g$, we define the following important theta series:
\begin{equation}
\vartheta(\omega, Z) = \sum_{n \in \Z^{g}}\exp(\pi i n^T Z n + 2 \pi i n^ T \omega).
\end{equation}

Recall that given a period matrix $Z \in \mathcal{H}_g$, we denote by $L_Z$ the lattice that is generated by the columns of the matrix $(\mathds{1}_g,Z)$. This gives a set of coordinates on the torus $\C^g/L_Z$ in the following way: A vector $x \in [0,1]^{2g}$ gives the point $x_2 +  Z x_1 \in \C/L_Z$, where, as in the previous section, $x_1$ denotes the first $g$ entries and $x_2$ denotes the last $g$ entries of a vector of length $2g$.

Of interest to us will be the values of $\vartheta(\omega, Z)$ at points $\omega \in \C^{g}$ that, under the natural quotient map $\C^g \to \C^g/L_Z$, map to $2$-torsion points. These points are of the form $\omega = \xi_2 + Z \xi_1$ for $\xi \in (1/2)\Z^{2g}$. This motivates the following definition:
\begin{equation*}
\vartheta[\xi](Z) = \exp(\pi i \xi_1^T Z \xi_1 + 2 \pi i \xi_1^T \xi_2) \vartheta( \xi_2 + Z \xi_1, Z).
\end{equation*}
In this context, $\xi$ is customarily called a \emph{characteristic} or \emph{theta characteristic}. The value $\vartheta[\xi](Z)$ is called a \emph{theta constant}. It is the special value $\vartheta[\xi](0,Z)$ of the theta function with characteristic $\xi$, which is defined in \cite[p. 123]{Mumford1}.

\begin{definition}
	We say that a characteristic $\xi \in (1/2)\Z^{2g}$ is \emph{even} if $e_*(\xi) = 1$ and \emph{odd} if $e_*(\xi) = -1$. If $\xi$ is even we call $\vartheta[\xi](Z)$ an \emph{even theta constant}, and if $\xi$ is odd we call $\vartheta[\xi](Z)$ an \emph{odd theta constant}.
\end{definition}

We have the following fact about the series $\vartheta[\xi](\omega,Z)$ \cite[Chapter II, Proposition 3.14]{Mumford1}: For $\xi \in (1/2)\Z^{2g}$,
\begin{equation*}
\vartheta[\xi](-\omega,Z) = e_*(\xi) \vartheta[\xi](\omega,Z).
\end{equation*}
From this we conclude that all odd theta constants vanish.

Finally, we will most often only be concerned about the vanishing or non-vanishing of certain values $\vartheta[\xi](Z)$ for $\xi$ even. In this case, because when $n \in \Z^{2g}$ we have
\begin{equation*}
\vartheta[\xi + n](Z) = \exp(2 \pi i \xi_1^T n_2)\vartheta[\xi](Z),
\end{equation*} 
we note that the vanishing depends only on the equivalence class of $\xi$ in $(1/2)\Z^{2g}/\Z^{2g}$.

\subsection{The Vanishing Criterion}
We are finally in a position to state the Mumford-Poor Vanishing Criterion:
\begin{theorem}[Poor {\cite[Main Theorem 2.6.1]{poor}}]\label{vanishingtheorem}
	Let $Z \in \mathcal{H}_g$ and $\eta \in \Xi_g$. Then the following statements are equivalent:
	\begin{itemize}
		\item $Z$ is the period matrix of a symplectically irreducible abelian variety and satisfies the following equations for a map $\eta \in \Xi_g$:
		\begin{equation}\label{VanishingCondition}
	 \text{For} ~ S \subseteq B ~ \text{with} ~ \#S\equiv 0 ~ \pmod{2}, \vartheta[\eta_S](Z) = 0 ~ \text{if and only if} ~ \#(S\circ U_{\eta}) \neq g+1.
		\end{equation}
		\item There is a hyperelliptic curve of genus $g$ whose Jacobian has period matrix $Z$ and $\eta$ is one of the maps associated to $Z$.
	\end{itemize}	
\end{theorem}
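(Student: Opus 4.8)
The plan is to prove the two implications separately, after first using the group action to reduce to a single standard combinatorial datum. By Proposition~\ref{prop:igusa}, the quotient $\Sp_{2g}(\Z)/\Gamma_2$ acts freely and transitively on the classes in $\Xi_g$, and this action is compatible with the usual action of $\Sp_{2g}(\Z)$ on $\mathcal{H}_g$ and on theta constants: the modular transformation law sends $\vartheta[\eta_S](Z)$ to a nonzero multiple of $\vartheta[(\gamma\cdot\eta)_S](\gamma\cdot Z)$, so the pattern of vanishing is preserved. Since symplectic irreducibility and the property of being a hyperelliptic Jacobian with $\eta$ associated are both invariant under this action, I would fix the representative $\eta=\tilde\eta$ from Example~\ref{MumfordAzygeticSystem}, with $U_{\tilde\eta}=\{2,4,\ldots,2g,\infty\}$, and reduce the theorem to the statement for $\tilde\eta$.

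For the implication that a hyperelliptic Jacobian satisfies the vanishing equations, I would argue as follows. First, the Jacobian of a smooth connected curve is indecomposable as a ppav, so its period matrix is symplectically irreducible. For the vanishing pattern, the odd characteristics contribute nothing, since every odd theta constant vanishes identically, as noted after the parity formula above. For an even characteristic $\eta_S$ I would appeal to the Riemann vanishing theorem: there is a vector of Riemann constants $\kappa$, itself the characteristic attached to $U_\eta$, such that $\vartheta[\eta_S](Z)=0$ exactly when the degree $g-1$ divisor class obtained from $e_S$ by shifting by $\kappa$ is effective. Translating effectivity through the isomorphism of \eqref{eq:2torsion} and then invoking hyperelliptic Riemann--Roch turns this into the combinatorial count $\#(S\circ U_\eta)\neq g+1$; the crux of this step is the elementary fact that a degree $g-1$ combination of Weierstrass points fails to be special precisely when the relevant index equals $g+1$.

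The reverse implication is the substantive one, and is where I expect the main difficulty. Here I am given only that $Z$ is symplectically irreducible and that its even theta constants vanish according to the $\tilde\eta$-pattern, and I must produce a hyperelliptic curve. I would pursue two complementary routes. The first is reconstructive: the non-vanishing even theta constants (those with $\#(S\circ U_\eta)=g+1$) determine, via Thomae-type formulae, a collection of cross-ratios that I would assemble into $2g+2$ points on $\mathbb{P}^1$ and hence a candidate hyperelliptic curve $X$; one then checks that $X$ is smooth of genus $g$ and, by Torelli together with the compatibility of the $\eta$-construction of \S\ref{mapseta}, that $\Jac(X)$ has period matrix $Z$ with $\tilde\eta$ associated. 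The second route is to show that the prescribed vanishing forces the theta divisor $\Theta$ of $(A,Z)$ to be singular along a subvariety of dimension at least $g-2$, and then to invoke the characterization of hyperelliptic Jacobians among indecomposable ppav by the dimension of $\mathrm{Sing}\,\Theta$. The obstacle is the same in both approaches: one must rule out \emph{every} other indecomposable ppav carrying the identical theta-null pattern, so the heart of the proof is a rigidity argument showing that the combinatorics of $\Xi_g$ is realized by no ppav other than a hyperelliptic Jacobian. I would expect to lean on Poor's framework to carry out this final rigidity step.
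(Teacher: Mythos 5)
The first thing to note is that the paper does not prove Theorem~\ref{vanishingtheorem} at all: it is quoted verbatim from Poor \cite[Main Theorem 2.6.1]{poor}, and the discussion that follows it (together with Example~\ref{etabar}) exists precisely to explain why the classical partial result, Mumford's Theorem 9.1 in \cite{Mumford}, is not sufficient. Measured against that, your proposal has a genuine gap, and it sits exactly at the point the paper is careful about. Your opening reduction claims that the modular transformation law sends $\vartheta[\eta_S](Z)$ to a nonzero multiple of $\vartheta[(\gamma\eta)_S](\gamma\cdot Z)$, where $\gamma\eta$ denotes the linear action of Proposition~\ref{prop:igusa}. This is false for general $\gamma=\left(\begin{smallmatrix}A&B\\C&D\end{smallmatrix}\right)\in\Sp_{2g}(\Z)$: the transformation law moves characteristics by an \emph{affine} map whose translation part is $\frac{1}{2}\left(\diag(CD^T),\diag(AB^T)\right)$, and this agrees with the linear action exactly when that shift is integral, i.e.\ exactly when $\gamma\in\Gamma_{1,2}$. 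The failure is already visible at $S=\emptyset$: since $\eta_\emptyset=(\gamma\eta)_\emptyset=0$ for every map and every $\gamma$, your claimed equivariance would make the vanishing of $\vartheta[0]$ invariant under all of $\Sp_{2g}(\Z)$, whereas the affine action of $\Sp_{2g}(\Z)/\Gamma_2$ permutes the $36$ even characteristics transitively and only the proper subgroup $\Gamma_{1,2}/\Gamma_2$ stabilizes the characteristic $0$.

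The consequence is not cosmetic. Because vanishing patterns transport only along $\Gamma_{1,2}$ (combined, if you wish, with relabelings of the branch points, which merely permute the $\eta_i$ and hence preserve $\#U_\eta$), your reduction reaches only those maps $\eta$ with $\#U_\eta=g+1$, i.e.\ the $\Gamma_{1,2}$-orbit of $\tilde\eta$ up to reindexing; on that orbit the statement is Mumford's Theorem 9.1. What the reduction silently discards is the other kind of map, with $\#U_\eta=2g+2$: for $g=3$ this is the map $\bar\eta$ of Example~\ref{etabar}, which by Lemma~\ref{lemma:sizeU} is \emph{forced} on every hyperelliptic period matrix with $\vartheta[0](Z)=0$, and which the paper's algorithm must handle as a separate case ($\delta=0$). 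So the cases your argument loses are exactly the ones for which the paper needs Poor's theorem rather than Mumford's. Separately, even within your reduced setting only half of the equivalence is established: your forward direction (Riemann vanishing plus hyperelliptic Riemann--Roch) is sound in outline, but both of your routes for the converse end by appealing to an unproved ``rigidity'' statement or to ``Poor's framework'' --- which is circular, since the statement being proved \emph{is} Poor's theorem.
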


\begin{remark}\label{symplecticallyirr}
Poor defines \emph{symplectically irreducible} on page 831 of \cite{poor}. His condition is equivalent to requiring that the abelian variety is not isomorphic as a polarized abelian variety to a product of lower-dimensional polarized abelian varieties. In this work, our period matrices are constructed to be \emph{simple}, i.e., not isogenous to a product of lower-dimensional polarized abelian varieties. Since isomorphism is stronger than isogeny, all of the period matrices we construct are symplectically irreducible, and we may apply the theorem.
\end{remark}

We note that the original idea behind the Vanishing Criterion is due to Mumford \cite{Mumford}. In Chapter 3, Corollary 6.7 of \emph{loc. cit.}, Mumford shows that with his specific choice of symplectic basis for the homology group of the hyperelliptic curve, the period matrix obtained satisfies the vanishing criterion (\ref{VanishingCondition}) above. In Theorem 9.1, he then presents a partial converse and states that if there is a map $\eta$ whose distinguished set $U_{\eta}$ has $g+1$ elements such that $Z$ satisfies the vanishing criterion (\ref{VanishingCondition}) for the map $\eta$, then $Z$ is a hyperelliptic period matrix.

We state Poor's result above because Theorem 9.1 in~\cite{Mumford} does not cover every hyperelliptic period matrix, as shown by the following example.

\begin{example}\label{etabar}
	Consider Mumford's choice of symplectic basis and his choice of labeling for the branch points of the hyperelliptic curve, exhibited at the beginning of \cite[Chapter 5]{Mumford} and reproduced in Appendix \ref{pathpicture} below. Act on this basis by the symplectic matrix
\begin{equation}\label{eq:gammabar}
\bar{\gamma} =
\begin{pmatrix}
1 & 1 & 1 & -1 & 1 & 1 \\
0 & 1 & 0 & 0 & -1 & 0 \\
0 & 1 & 0 & -1 & 1 & 1 \\
-1 & -1 & -1 & 2 & -1 & -1 \\
0 & 0 & -1 & 1 & -1 & -1 \\
0 & -1 & -1 & -1 & 1 & 0
\end{pmatrix},
\end{equation}
without changing the labeling of the points. As computed by Poor \cite{poor}, if $\gamma = \left(\begin{smallmatrix}A & B \\ C & D \end{smallmatrix}\right) \in \Sp_{2g}(\Z)$ acts on a symplectic basis with associated map $\tilde{\eta}$, then after the change of basis, if the branch points are not relabeled, the map associated to the new basis will be given by $\gamma^* \tilde{\eta}$, where $\gamma^* = \left(\begin{smallmatrix}A & -B \\ -C & D \end{smallmatrix}\right)$. In our particular case, after performing this action, the class of the new map $\bar{\eta} = \gamma^* \tilde{\eta}$ is given by the values
\begin{gather*}
\bar{\eta}_1 = \begin{pmatrix} \frac{1}{2}  & 0 & 0 & \frac{1}{2}  & 0 & 0\end{pmatrix}, \quad \bar{\eta}_2 = \begin{pmatrix} 0 & 0 & \frac{1}{2}  & \frac{1}{2}  & \frac{1}{2}  & \frac{1}{2} \end{pmatrix} \\
\bar{\eta}_3 = \begin{pmatrix} 0 & \frac{1}{2}  & 0 & \frac{1}{2}  & \frac{1}{2}  & 0\end{pmatrix}, \quad \bar{\eta}_4 = \begin{pmatrix} \frac{1}{2}  & 0 & \frac{1}{2}  & 0 & 0 & \frac{1}{2} \end{pmatrix} \\
\bar{\eta}_5 = \begin{pmatrix} \frac{1}{2}  & \frac{1}{2}  & 0 & 0 & \frac{1}{2}  & \frac{1}{2} \end{pmatrix}, \quad \bar{\eta}_6 = \begin{pmatrix} 0 & \frac{1}{2}  & \frac{1}{2}  & \frac{1}{2}  & 0 & \frac{1}{2} \end{pmatrix} \\
\bar{\eta}_7 = \begin{pmatrix} \frac{1}{2}  & \frac{1}{2}  & \frac{1}{2}  & 0 & \frac{1}{2}  & 0\end{pmatrix},
\end{gather*}
where each entry is reduced modulo $\Z$. The map $\bar{\eta}$ given by these values has distinguished set $U_{\bar{\eta}}$ equal to all of $B$, which does not have cardinality $g+1 = 4$. Furthermore, we will show in Lemma \ref{lemma:sizeU} that if a period matrix is associated to $\bar{\eta}$, then it will only be associated to maps $\eta$ with $\# U_{\eta} = 8$.
\end{example}

\subsection{Takase's modified formula}

Given a hyperelliptic period matrix $Z$ and one of its associated maps $\eta$, we can construct a model for the hyperelliptic curve via Thomae's formulae. To state the formulae, we set up some notation. If $Z$ is a symplectically irreducible period matrix satisfying the Vanishing Criterion (\ref{VanishingCondition}) for some map $\eta \in \Xi_g$, then $Z$ is the period matrix of a hyperelliptic Jacobian. Further, the map $\eta$ comes equipped with a labeling of the branch points of the hyperelliptic map $\pi \colon X \to \mathbb{P}^1$, $P_1, \ldots, P_{2g+1}, P_{\infty}$. Let $x$ be a choice of $x$-coordinate such that the hyperelliptic curve has a model of the form $y^2 = f(x)$, for $f$ of degree $2g+1$, with $x(P_{\infty}) = \infty$. Then we write $a_i = x(P_i)$ for $i = 1, \ldots, 2g+1$, and these are all finite values in $\C$.

\begin{theorem}[Thomae {\cite[Chapter III, Theorem 8.1]{Mumford}}]
	Let $Z$ satisfy the Vanishing Criterion for a map $\eta \in \Xi_g$. Then for all sets $S \subseteq B-\{ \infty \}$, $\#S$ even, and with notation as above, there is a constant $c$ independent of $S$ such that
	\begin{equation*}
		\vartheta[\eta_S](Z)^4=
		\begin{cases}
			0 & \text{if }\#(S\circ U_{\eta}) \neq g+1,\\
			c\cdot (-1)^{(\#S\cap U_{\eta})}\cdot  \prod _{\substack{i\in S\circ U_{\eta}, \\ j\in B-S\circ U_{\eta}-\{\infty\}}}(a_i-a_j)^{-1} & \text{if } \#(S\circ U_{\eta})=g+1.
		\end{cases}
	\end{equation*}
\end{theorem}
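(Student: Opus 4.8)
The plan is to separate the two cases of the formula. The vanishing case, $\vartheta[\eta_S](Z)^4 = 0$ when $\#(S\circ U_\eta)\neq g+1$, is nothing more than the Vanishing Criterion of Theorem \ref{vanishingtheorem} applied to the map $\eta$ associated to $Z$, so there is nothing to prove beyond invoking that result. The content is therefore entirely in the non-vanishing case, and the strategy is to establish it first for Mumford's distinguished map $\tilde{\eta}$ of Example \ref{MumfordAzygeticSystem}, where it is exactly the classical Thomae formula proved in \cite[Chapter III, Theorem 8.1]{Mumford}, and then to transport it to an arbitrary $\eta\in\Xi_g$ by the action of $\Sp_{2g}(\Z)$.

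For the transport step I would first relabel the branch points so that the labeling agrees with Mumford's; this is a permutation that permutes the $a_i$ and transforms both sides of the desired identity in the same way, so it reduces matters to the case where $\eta$ and $\tilde{\eta}$ use a common labeling and differ only in the symplectic basis. For this common curve and labeling the two period matrices $Z$ and $Z_M$ are $\Sp_{2g}(\Z)$-equivalent, and by Proposition \ref{prop:igusa} the maps $\eta$ and $\tilde{\eta}$ are related by the induced action of the same $\gamma$, well defined up to $\Gamma_2$, which does not change its class. Since the labeling is fixed, each even subset $S$ indexes the same $2$-torsion point $e_S$ in both bases, so $\vartheta[\eta_S](Z)$ and $\vartheta[\tilde{\eta}_S](Z_M)$ agree up to the standard automorphy factor for the action of $\Sp_{2g}(\Z)$ on theta functions with characteristics. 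Passing to fourth powers replaces $\det(CZ+D)^{1/2}$ by $\det(CZ+D)^2$ and eliminates the eighth-root-of-unity $\kappa(\gamma)$, leaving only a characteristic-dependent phase to control; substituting Mumford's formula for $\vartheta[\tilde{\eta}_S](Z_M)^4$ then produces a candidate value for the constant $c$.

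The hard part will be the combinatorial and sign bookkeeping needed to show that this candidate $c$ is genuinely independent of $S$. Two effects must be reconciled at once: in Mumford's formula the product over pairs $(a_i-a_j)^{-1}$ is indexed by $S\circ U_{\tilde{\eta}}$, whereas the target formula indexes it by $S\circ U_\eta$, and the distinguished sets $U_{\tilde{\eta}}$ and $U_\eta$ differ; moreover the sign $(-1)^{\#S\cap U}$ together with the surviving characteristic-dependent phase from the automorphy factor must exactly absorb the discrepancy between these two products. I would carry this out using the defining relations of $\Xi_g$ in Definition \ref{def:Xi}, in particular the $e_2$- and $e_*$-identities (properties (4) and (5)), and Proposition \ref{prop:asygetic}, which controls how $U_\eta$ is determined by the values $\eta_i$. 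Particular care is required for maps with $\#U_\eta\neq g+1$, such as the map $\bar{\eta}$ of Example \ref{etabar} with $U_{\bar{\eta}}=B$: these do not arise from Mumford's basis by any relabeling of branch points, and it is precisely their existence that makes the statement go beyond \cite[Theorem 9.1]{Mumford} and forces the argument through the full $\Sp_{2g}(\Z)/\Gamma_2$-orbit rather than through permutations of the $a_i$ alone.
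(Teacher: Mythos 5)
Your strategy---prove the formula for Mumford's $\tilde{\eta}$ and transport it to a general $\eta$ by a symplectic change of basis---is not the paper's route, and it has a genuine gap at its central step. You assert that, once the labelings agree, ``each even subset $S$ indexes the same $2$-torsion point $e_S$ in both bases, so $\vartheta[\eta_S](Z)$ and $\vartheta[\tilde{\eta}_S](Z_M)$ agree up to the standard automorphy factor.'' This fails for exactly the $\gamma$ you need. The theta transformation formula relates $\vartheta[\cdot](Z_M)$ to $\vartheta[\cdot](\gamma \cdot Z_M)$ through the \emph{affine} action of $\gamma$ on characteristics, whose translation part is the half-integral vector built from $\diag(CD^T)$ and $\diag(AB^T)$, whereas the map associated to the new basis is $\gamma^*\tilde{\eta}$, i.e.\ the \emph{linear} action (this is precisely Poor's computation quoted in Example~\ref{etabar}). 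The two actions agree modulo $\Z^{2g}$ only for $\gamma \in \Gamma_{1,2}$; but reaching maps with $\#U_{\eta}=8$, such as $\bar{\eta}$, forces $\gamma \notin \Gamma_{1,2}$, since $\Gamma_{1,2}$ preserves $e_*$ and hence the cardinality of the distinguished set. One can see directly that your claim is untenable: if $\vartheta[\eta_S](Z)$ and $\vartheta[\tilde{\eta}_S](Z_M)$ were proportional by a nonzero factor for every even $S$, the Vanishing Criterion (satisfied by both pairs $(Z,\eta)$ and $(Z_M,\tilde{\eta})$) would force $\#(S\circ U_{\eta})=g+1 \iff \#(S\circ U_{\tilde{\eta}})=g+1$ for all even $S$; taking $U_{\eta}=B$ and $S=\{1,3,5,7\}$ gives $\#(S\circ U_{\eta})=4$ but $\#(S\circ U_{\tilde{\eta}})=8$, a contradiction. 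For the same reason, your hope that residual phases will ``absorb the discrepancy'' between the product indexed by $S\circ U_{\tilde{\eta}}$ and the one indexed by $S\circ U_{\eta}$ cannot work: these are genuinely different rational functions of the $a_i$, and no root-of-unity factor converts one into the other. The correct mechanism, if you wish to salvage the transport argument, is that the affine shift replaces the characteristic $\tilde{\eta}_S$ by $\tilde{\eta}_{S\circ T}$ for a fixed even set $T$ with $U_{\eta}=T\circ U_{\tilde{\eta}}$, so that Mumford's formula applied to $S\circ T$ produces the product indexed by $S\circ U_{\eta}$ on the nose; carrying this out requires Igusa's transformation theory with all of its phase bookkeeping.

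The paper's proof is entirely different and much shorter: it does not transport the formula at all. Instead it traces the logical dependencies in Mumford's proof of \cite[Chapter III, Theorem 8.1]{Mumford} (Proposition 6.3, Theorem 7.6, Theorem 5.3 part 3, Corollary 7.4) and observes that the only ingredient whose proof uses $\#U_{\eta}=g+1$ is the Generalized Frobenius Theta Formula (Theorem 7.1 there); since Poor \cite[Proposition 1.6.10]{poor} proves that formula for arbitrary $\eta \in \Xi_g$, Mumford's argument goes through verbatim for any associated map, once $\delta$ is replaced by $\eta_{U_{\eta}}$. If you pursue your route instead, you must replace your middle step by the affine-action computation sketched above; as written, the argument breaks down exactly in the cases (such as $\bar{\eta}$) that make the general statement stronger than \cite[Theorem 9.1]{Mumford}.
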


\begin{proof}
We note here the slight modifications to Mumford's proof that are necessary to ensure that the proof applies to any period matrix, and not only those considered by Mumford (see the remarks immediately above Example \ref{etabar} for more details).

The proof of Mumford's Theorem 8.1 logically relies on Proposition 6.3, which we assume here to be true about any map $\eta$, and Theorem 7.6. Theorem 7.6 in turn relies on Part 3 of Theorem 5.3 and Corollary 7.4. The proof of part 3 of Theorem 5.3 is valid, as long as $\delta$ is replaced with the vector $\eta_{U_{\eta}}$ for a map $\eta$ associated with the period matrix $Z$ and $\eta_k$ is as in our definitions. The argument of Corollary 7.4 relies only on the Generalized Frobenius Theta Formula (Theorem 7.1), which Mumford shows only for maps $\eta$ with $\#U_{\eta} = g+1$, but which is shown in full generality by Poor \cite[Proposition 1.6.10]{poor}. Therefore we conclude that the Thomae formulae are valid for any period matrix, since the Generalized Frobenius Theta Formula is.
\end{proof}

Then we have the following:
\begin{theorem}[Takase \cite{Takase}]\label{Th:Takase}
Let $Z$ a period matrix and $\eta\in \Xi_g$ be such that the Vanishing Criterion (\ref{VanishingCondition}) is satisfied. Then, again with notation as above, for any disjoint decomposition $B - \{\infty\}=V\sqcup W \sqcup \{k,l,m\}$ with $\# V=\# W=g-1$, we have
	\begin{equation*}
		\frac{a_k-a_l}{a_k-a_m}=\epsilon(k,l,m)\left(\frac{\vartheta[U_{\eta}\circ (V\cup\{k,l\})]\cdot \vartheta[U_{\eta}\circ (W\cup \{k,l\})]}{\vartheta[U_{\eta}\circ (V\cup \{k,m\})]\cdot \vartheta[U_{\eta}\circ (W\cup \{k,m\})]}\right)^2,
	\end{equation*}
	with
	$$\epsilon(k,l,m)=
	\begin{cases}	
	1 & \text{if } k<l,m \text{ or } l,m<k \\
	-1 & \text{if } l<k<m \text{ or } m<k<l, \\
	\end{cases}
	$$
	and where to lighten the notation we denote $\vartheta[\eta_S](Z)$ by $\vartheta[S]$.
\end{theorem}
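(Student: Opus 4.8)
The plan is to deduce everything from Thomae's formula (the preceding theorem) by a direct combinatorial cancellation, squaring the asserted identity at the outset so that the fourth powers produced by Thomae appear naturally. Squaring removes the sign $\epsilon(k,l,m)$, since $\epsilon(k,l,m)^2 = 1$, and reduces the claim to the identity
\begin{equation*}
\frac{\vartheta[U_{\eta}\circ(V\cup\{k,l\})]^4\,\vartheta[U_{\eta}\circ(W\cup\{k,l\})]^4}{\vartheta[U_{\eta}\circ(V\cup\{k,m\})]^4\,\vartheta[U_{\eta}\circ(W\cup\{k,m\})]^4} = \left(\frac{a_k-a_l}{a_k-a_m}\right)^2.
\end{equation*}
Each of the four characteristics has the shape $\eta_{U_{\eta}\circ T}$ with $\#T = g+1$, so $\#\big((U_{\eta}\circ T)\circ U_{\eta}\big) = \#T = g+1$, and the Vanishing Criterion (\ref{VanishingCondition}) guarantees that all four theta constants are nonzero; in particular the ratio is well defined.

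First I would apply Thomae's formula to each factor. For a theta constant $\vartheta[U_{\eta}\circ T]$, the relevant input set in Thomae is the even-cardinality representative of the class of $U_{\eta}\circ T$ lying in $B-\{\infty\}$; since $\eta$ is insensitive to complementation and to toggling $\infty$ (because $\eta(\{\infty\})=0$ and $\eta(S)=\eta(S^c)$), and since the product in Thomae ranges over the \emph{finite} branch points only, a short check shows that the resulting product is $\prod_{i\in T,\ j\in (B-\{\infty\})-T}(a_i-a_j)^{-1}$, irrespective of whether $\infty\in U_{\eta}$. For $T = V\cup\{k,l\}$ the complementary block in $B-\{\infty\}$ is $W\cup\{m\}$, and likewise for the other three, using $B-\{\infty\}=V\sqcup W\sqcup\{k,l,m\}$.

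Writing $P(X,Y)=\prod_{i\in X,\,j\in Y}(a_i-a_j)$ for disjoint $X,Y$, I would then expand the four products over the five blocks $V,W,\{k\},\{l\},\{m\}$ and form the ratio on the left. Three things happen: the universal constant $c$ enters as $c^2/c^2$ and cancels; the blocks $P(V,W),P(W,V),P(k,V),P(k,W)$ common to numerator and denominator cancel; and among the surviving factors the four ratios $P(V,l)/P(l,V)$, $P(W,l)/P(l,W)$, $P(m,W)/P(W,m)$, $P(m,V)/P(V,m)$ each equal $(-1)^{g-1}$ (as $\#V=\#W=g-1$), for a net contribution $(-1)^{4(g-1)}=1$, while $(a_m-a_l)^2=(a_l-a_m)^2$. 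What remains is exactly $(a_k-a_l)^2/(a_k-a_m)^2$. It then remains to control the sign coming from the factors $(-1)^{\#(S\cap U_{\eta})}$ in Thomae: using $\#\big((U_{\eta}\circ T)\cap U_{\eta}\big)=\#U_{\eta}-\#(U_{\eta}\cap T)$ (or, for the complementary representative needed when $\infty\in U_{\eta}$, the value $\#(U_{\eta}\cap T)$), the four exponents sum to a quantity in which the contributions of $V$ and $W$ occur twice, that of $k$ four times, and those of $l,m$ twice each; hence the total exponent is even and the overall sign is $+1$. This proves the squared identity, and therefore Takase's formula up to an undetermined sign $\epsilon(k,l,m)\in\{\pm1\}$ depending only on the triple $(k,l,m)$.

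The main obstacle is precisely this sign. Thomae's formula only ever computes fourth powers, so it is structurally blind to the distinction between $\epsilon(k,l,m)=+1$ and $\epsilon(k,l,m)=-1$; the squared identity above is all one can extract from it. To pin down $\epsilon(k,l,m)$ I would either invoke Takase's original argument \cite{Takase}, which works directly with the theta quotient (via second-order theta identities) rather than its fourth power, or argue by continuity: the function
\begin{equation*}
\epsilon(k,l,m)=\frac{a_k-a_l}{a_k-a_m}\Big/\left(\frac{\vartheta[U_{\eta}\circ(V\cup\{k,l\})]\,\vartheta[U_{\eta}\circ(W\cup\{k,l\})]}{\vartheta[U_{\eta}\circ(V\cup\{k,m\})]\,\vartheta[U_{\eta}\circ(W\cup\{k,m\})]}\right)^2
\end{equation*}
is continuous and nowhere zero on the connected family of hyperelliptic period matrices carrying a fixed labeling, takes values in $\{\pm1\}$ by the squared identity, and is therefore constant on each order-type of $(k,l,m)$; its value could then be read off from a single explicit curve. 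Establishing that this constant is exactly the stated $\epsilon(k,l,m)$ — equal to $+1$ when $k<l,m$ or $l,m<k$, and to $-1$ otherwise — is the delicate part of the argument, and is where I expect essentially all the work beyond the bookkeeping to lie.
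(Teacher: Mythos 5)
Your reduction of the squared identity to Thomae's formula is correct, and the bookkeeping checks out: taking the representative $S_i=(U_{\eta}\circ T_i)^c\subseteq B-\{\infty\}$ of each characteristic, one gets $S_i\circ U_{\eta}=B-T_i$ and $\#(S_i\cap U_{\eta})=\#(T_i\cap U_{\eta})$, the constant $c$ cancels, the signed sum of the four exponents is $2\bigl([l\in U_{\eta}]-[m\in U_{\eta}]\bigr)$, hence even, and the surviving factors collapse to $\bigl((a_k-a_l)/(a_k-a_m)\bigr)^2$. But, as you yourself say, this is structurally all that fourth powers can give, and the determination of $\epsilon(k,l,m)$ is precisely the content of Theorem~\ref{Th:Takase} beyond Thomae. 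That sign is not proved in your proposal, so the proof is incomplete at its essential point. The continuity argument is not yet a proof: you need a connected parameter space on which the labeled branch points, the period matrix, \emph{and} the squared theta quotients all vary continuously and single-valuedly. The configuration space of $2g+1$ labeled points is connected, but $Z$ is only defined over it up to the monodromy action of a subgroup of $\Sp_{2g}(\Z)$, and squared (as opposed to fourth-power) theta quotients are not obviously invariant under that subgroup --- Igusa's transformation theory gives invariance of such quotients only under smaller groups of $\Gamma(2,4)$ type, not under all of $\Gamma_2$ --- so local constancy does not automatically globalize. You would also need the constant to be independent of the choice of $V$, $W$ and of the map $\eta$, which is itself part of what must be shown before one example can pin it down.

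Your other remedy, ``invoke Takase's original argument,'' is in fact the paper's entire proof: the paper simply observes that Takase's derivation, which manipulates the theta quotients themselves (and therefore never loses the sign), goes through for an arbitrary $\eta\in\Xi_g$ satisfying the Vanishing Criterion~(\ref{VanishingCondition}) once Mumford's $\tilde{\eta}$ is replaced by $\eta$ and $U$ by $U_{\eta}$. If you take this route you must actually perform that verification --- Takase's paper is written for a particular normalization of the eta map, and checking that nothing in his argument depends on it is the one nontrivial thing the paper's proof asserts --- and at that point your Thomae computation becomes redundant, since Takase's argument already yields the full signed identity. So either complete the continuity argument (prove the connectedness/monodromy-invariance statements above and evaluate one explicit example per order type), or carry out the paper's check that Takase's proof is insensitive to the choice of $\eta$; as written, the sign $\epsilon(k,l,m)$ remains unproven.
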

\begin{proof}
The proof follows as in~\cite{Takase} for any period matrix $Z$, once we replace Mumford's $\tilde{\eta}$ with any $\eta$ associated to our period matrix $Z$ and $U$ with the set $U_{\eta}$.
\end{proof}

\begin{remark}
Note that in genus 2, all vanishing theta constants correspond to odd characteristics, and the conditions in Theorem~\ref{vanishingtheorem} for a given period matrix $Z$ are trivially verified for any map $\eta \in \Xi_2$. Therefore it is possible to give a formula for the Rosenhain invariants of a genus $2$ period matrix that is valid for any period matrix. In consequence the issues considered in this paper, where we are concerned with defining the set $\Xi_g$ and computing an element of it corresponding to a given period matrix, do not arise.
\end{remark}

Finally, to fix a model for our hyperelliptic curve of genus $3$, we require that $x(P_1) = 0$ and $x(P_2) =1$ and compute the Rosenhain model
\begin{equation*}
	y^2=x(x-1)(x-a_3)(x-a_4)(x-a_5)(x-a_6)(x-a_7)
\end{equation*}
of the curve. This allows us to compute $a_i$, $i = 3, \ldots, 7$ directly using the formula above, with the choice $k = 1$ and $m = 2$ for each $i$.

\section{Computing the map $\eta$}\label{computingeta}

We now show how to give a map $\eta$ associated with a period matrix given only the values of the even theta constants. We note that throughout this section, we will be concerned with computing the \emph{class} of a map $\eta$ associated to $Z$. To apply Theorem \ref{Th:Takase}, we then lift each value $\eta_i \pmod{\Z^6}$ to a value in $(1/2)\Z^6$ in the naive way, and then use these values to compute $\eta_S$ for the other $S \subseteq B$ using Property \ref{property:sum} of Definition \ref{def:Xi}.

We have already remarked that with Mumford's map $\tilde{\eta}$ and the transitive action of $\Sp_{2g}(\mathbb{F}_2)$ on the classes of $\Xi_g$, we can compute a representative of each class. It would suffice then to verify if $Z$ satisfies the Vanishing Criterion for each class of maps until we find one that works. Unfortunately, the size of the group $\Sp_{2g}(\mathbb{F}_2)$ grows quickly as $g$ grows which makes this unmanageable. For this reason, in this section we provide a faster way to construct a map $\eta$ attached to a period matrix for the case $g=3$. Throughout, we will need the group $\Gamma_{1,2}$, where for $Q$ the quadratic form $Q(x) = x_1^T  x_2$,
 \begin{equation*} \Gamma_{1,2} = \{\gamma \in \Sp_{2g}(\Z) : Q(\gamma x) \equiv Q(x) \pmod{2} \}.
 \end{equation*}
We note that the quotient $\Gamma_{1,2}/\Gamma_2$ is isomorphic to the special orthogonal group of matrices that preserve the $2g$-ary positive definite quadratic form of Arf invariant $0$ over $\mathbb{F}_2$. (See \cite[Equation (1.10)]{grossharris} for the definition of Arf invariant, as well as a discussion of the relationship between quadratic forms and theta characteristics.)

\subsection{Characterizing hyperelliptic Jacobians when $g = 3$}
To recognize hyperelliptic Jacobians in our situation, we use the following theorem:

\begin{theorem}[Igusa {\cite[Lemmata 10 and 11]{igusa67}}] When $g = 3$, the Vanishing Criterion (\ref{VanishingCondition}) reduces to the following: If $Z$ is the period matrix of a simple ppav of dimension $3$, then $Z$ is the period matrix of a hyperelliptic Jacobian if and only if $\vartheta[\xi](Z)$ vanishes for a single equivalence class $\xi \in (1/2)\Z^{6}/\Z^{6}$ with $e_*(\xi) = 1$.
\end{theorem}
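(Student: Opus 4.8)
The plan is to deduce the statement from the Mumford--Poor Vanishing Criterion (Theorem \ref{vanishingtheorem}) together with a combinatorial analysis, special to $g=3$, that collapses the entire vanishing pattern (\ref{VanishingCondition}) into a single condition on even theta constants. Two preliminary reductions drive everything. First, since all odd theta constants vanish, the statement of (\ref{VanishingCondition}) is automatic for those $S$ with $\eta_S$ odd and imposes no condition on $Z$; so (\ref{VanishingCondition}) is genuinely a statement about the even theta constants alone. Second, by property \ref{property:parity} of Definition \ref{def:Xi} with $g=3$ (so that $\#U_\eta \equiv 0 \pmod 4$), a characteristic $\eta_S$ with $\#S$ even is \emph{even} exactly when $\#(S \circ U_\eta) \in \{0,4,8\}$ and \emph{odd} exactly when $\#(S \circ U_\eta) \in \{2,6\}$.

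Combining these, for a fixed $\eta \in \Xi_3$ the condition (\ref{VanishingCondition}) holds for $Z$ if and only if, among the $36$ even theta constants, the vanishing ones are exactly those $\eta_S$ with $\#(S \circ U_\eta) \neq 4$, i.e. with $\#(S \circ U_\eta) \in \{0,8\}$. Now $\#(S \circ U_\eta) = 0$ forces $S = U_\eta$ and $\#(S \circ U_\eta) = 8$ forces $S = U_\eta^{c}$, and these represent the same element of $G_B$ under $S \sim S^c$; hence there is exactly one such class, namely the even characteristic $\eta_{U_\eta}$. (A bookkeeping check confirms the counts: since $S \mapsto S\circ U_\eta$ permutes $G_B$, the values $\#(S\circ U_\eta)\in\{0,8\}$, $\{4\}$, $\{2,6\}$ occur for $1$, $35$, and $28$ of the $64$ classes, matching the $1+35=36$ even and $28$ odd characteristics.) Thus, for fixed $\eta$, condition (\ref{VanishingCondition}) is equivalent to the single requirement that $\vartheta[\eta_{U_\eta}](Z)=0$ and that this is the only vanishing even theta constant. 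The forward direction now follows at once: if $Z$ is a hyperelliptic Jacobian then, a simple ppav being symplectically irreducible (Remark \ref{symplecticallyirr}), Theorem \ref{vanishingtheorem} supplies an $\eta\in\Xi_3$ satisfying (\ref{VanishingCondition}), and the count shows exactly one even theta constant vanishes.

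For the converse, suppose $Z$ is a simple period matrix with exactly one vanishing even theta constant, at the even characteristic $\xi_0$. I would produce a map $\eta\in\Xi_3$ with $\eta_{U_\eta}=\xi_0$; by the equivalence just established, $Z$ then satisfies (\ref{VanishingCondition}) for this $\eta$ (the lone even vanishing occurs at $\eta_{U_\eta}=\xi_0$, every other even constant is nonzero, and every odd one vanishes), so Theorem \ref{vanishingtheorem} forces $Z$ to be a hyperelliptic Jacobian. Producing such an $\eta$ amounts to showing that the assignment $\eta\mapsto\eta_{U_\eta}$, from classes in $\Xi_3$ to even characteristics, is surjective. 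I would argue this via Proposition \ref{prop:igusa}: $\Sp_6(\Z)/\Gamma_2\cong\Sp_6(\F_2)$ acts freely and transitively on the classes of $\Xi_3$, while through the standard dictionary between even characteristics and quadratic forms of Arf invariant $0$ over $\F_2$ it acts transitively on the $36$ even characteristics. Since $\eta_{U_\eta}$ is always even and Mumford's explicit $\tilde\eta$ (Example \ref{MumfordAzygeticSystem}) realizes one value $\tilde\eta_{U_{\tilde\eta}}$, equivariance of $\eta\mapsto\eta_{U_\eta}$ would promote this one value to the entire orbit, i.e. to all even characteristics.

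The main obstacle is exactly this equivariance/surjectivity step. The action on $\Xi_3$ in Proposition \ref{prop:igusa} is linear on the codomain, whereas $U_\eta$ is defined through $e_*$ (equivalently through the quadratic form $Q(x)=x_1^T x_2$), which is preserved only by $\Gamma_{1,2}/\Gamma_2\cong O_6^{+}(\F_2)$ and not by all of $\Sp_6(\F_2)$; so one must carefully track how $U_\eta$, and hence $\eta_{U_\eta}$, transforms under a change of basis. This is the same bookkeeping already illustrated in Example \ref{etabar}, where acting by $\bar\gamma$ enlarges $U_\eta$ from $4$ to $8$ elements. Once the compatibility is pinned down, the $36$ cosets of $O_6^{+}(\F_2)$ in $\Sp_6(\F_2)$ should be seen to yield the $36$ distinct even characteristics as values of $\eta_{U_\eta}$, giving surjectivity. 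As an alternative route to the converse, one could instead invoke the dichotomy that a simple ppav of dimension $3$ is the Jacobian of either a hyperelliptic or a plane quartic curve, together with the classical fact that no even theta constant of a plane quartic Jacobian vanishes: a single vanishing even theta constant then excludes the plane quartic case and forces $Z$ to be hyperelliptic.
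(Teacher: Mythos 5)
Your route is genuinely different from the paper's. The paper proves this theorem in a few lines from Igusa's results on two explicit Siegel modular forms: $\Sigma_{140}$ vanishes exactly on the symplectically reducible locus, so simplicity of $Z$ forces \emph{at most} one even theta constant to vanish, while $\chi_{18}=\prod_i \vartheta[\xi_i]$ vanishes exactly on the union of the reducible and hyperelliptic loci, so hyperellipticity is equivalent to \emph{at least} one vanishing. You instead derive the statement from the Mumford--Poor criterion (Theorem \ref{vanishingtheorem}) by a parity count. That count is correct: $e_*(\eta_S)=(-1)^{(4-\#(S\circ U_\eta))/2}$ sorts the $64$ classes of $G_B$ into $1+35$ even and $28$ odd ones, $\#(S\circ U_\eta)\in\{0,8\}$ pins down the single class $\{U_\eta,U_\eta^c\}$, and your equivalence ``(\ref{VanishingCondition}) holds for $\eta$ if and only if the unique vanishing even constant is $\vartheta[\eta_{U_\eta}]$'' is essentially the content of the paper's Proposition \ref{prop:etaU} --- except that in the paper that proposition is deduced \emph{from} this theorem, whereas you prove it standalone and use it in both directions. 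Your forward implication is therefore complete. What the paper's approach buys is brevity, at the price of citing Igusa's vanishing-locus results; yours is self-contained given Poor's criterion, but must additionally prove that $\eta\mapsto\eta_{U_\eta}$ hits every even characteristic, a fact the paper never needs for this theorem.

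That surjectivity step is where your proposal has a real gap, and your proposed repair is not the right one. Equivariance of $\eta\mapsto\eta_{U_\eta}$ under all of $\Sp_6(\F_2)$ is false, not merely delicate: Example \ref{etabar} is a counterexample, since $\bar{\eta}=\bar{\gamma}^*\tilde{\eta}$ has $\bar{\eta}_{U_{\bar{\eta}}}=0$ while $\bar{\gamma}^*(\tilde{\eta}_{U_{\tilde{\eta}}})\neq 0$ (the matrix is invertible mod $2$ and $\tilde{\eta}_{U_{\tilde{\eta}}}\neq 0$). In particular a single-orbit or coset-counting argument cannot produce all $36$ values. The correct repair splits into two cases, both of which appear later in the paper: for $\delta\neq 0$, elements $\gamma\in\Gamma_{1,2}/\Gamma_2\cong O_6^+(\F_2)$ preserve $e_*$, hence fix the set $U_\eta$ and satisfy $(\gamma\eta)_{U_{\gamma\eta}}=\gamma(\eta_{U_\eta})$, and transitivity of $O_6^+(\F_2)$ on nonzero singular vectors (Witt's theorem) realizes every nonzero even $\delta$ starting from $\tilde{\eta}$ --- this is in substance Proposition \ref{prop:cardinality4}; for $\delta=0$, no map with $\#U_\eta=4$ can work (by injectivity of $S\mapsto\eta_S$ on classes, cf.\ Lemma \ref{lemma:sizeU}), and one needs the separate example $\bar{\eta}$ with $U_{\bar{\eta}}=B$. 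With these two cases your converse closes. Alternatively, your fallback argument is sound and bypasses surjectivity entirely: a vanishing even theta-null on a genus-$3$ Jacobian corresponds to an effective even theta characteristic, forcing $h^0\geq 2$ in degree $2$ and hence a $g^1_2$ by Clifford's theorem, so a plane quartic Jacobian has no vanishing even theta constant; combined with the hyperelliptic/plane-quartic dichotomy for simple three-dimensional ppav, this yields the converse and is essentially Igusa's original argument.
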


\begin{proof}
Let $\xi_i \in (1/2)\Z^{6}/\Z^{6}$ be an ordered set of representatives of the equivalence classes of even vectors $\xi \in (1/2)\Z^{6}$, where the ordering is arbitrary. We note that up to equivalence modulo $\Z^6$, there are $36$ even theta characteristics.

In \cite{igusa67}, Igusa defines two distinguished Siegel modular forms of genus $3$,
\begin{equation*}
\Sigma_{140}(Z) = \sum_{i = 1}^{36} \prod_{j \neq i} \vartheta[\xi_j](Z)^8,
\end{equation*}
and
\begin{equation*}
\chi_{18}(Z) = \prod_{i=1}^{36} \vartheta[\xi_i](Z),
\end{equation*}
and shows that $\Sigma_{140}(Z)$ vanishes exactly on the locus of period matrices $Z$ that are symplectically reducible (this is equivalent to requiring that the associated polarized abelian variety is isomorphic to a product of lower-dimensional polarized abelian varieties), and $\chi_{18}(Z)$ vanishes on the locus of period matrices $Z$ that are symplectically reducible or whose associated ppav is isomorphic to the Jacobian of a hyperelliptic curve.

	It now suffices to notice that if $Z$ is the period matrix of a simple ppav of dimension $3$, then $\Sigma_{140}(Z) \neq 0$, so at most one even theta constant vanishes. But in this case,  $Z$ is hyperelliptic if and only if $\chi_{18}(Z)$ vanishes, which implies that at least one even theta constant vanishes.
\end{proof}

\begin{definition}
	Let $Z$ be the period matrix of a simple genus $3$ hyperelliptic Jacobian. Then we denote by $\delta$ the unique vector in $(1/2)\Z^{6}/\Z^{6}$ such that $\vartheta[\delta](Z) =0$ and $e_*(\delta) = 1$, and call it the \emph{vanishing even characteristic}.
\end{definition}

\begin{proposition}\label{prop:etaU}
	Suppose that $Z$ is the period matrix of a simple hyperelliptic Jacobian and $g = 3$. Then for any $\eta$ associated to $Z$, the vanishing even characteristic of $Z$ is $\eta_{U_{\eta}}$. Conversely, if $Z$ has vanishing even characteristic $\delta$ and $\delta = \eta_{U_{\eta}}$ for some map $\eta$, then $Z$ satisfies the Vanishing Criterion (\ref{VanishingCondition}) for the map $\eta$.
\end{proposition}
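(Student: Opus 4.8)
The plan is to reduce both directions to bookkeeping with the parity property (property \ref{property:parity} of Definition \ref{def:Xi}) combined with the preceding theorem of Igusa, which says that a simple genus $3$ ppav is hyperelliptic exactly when a single even theta constant vanishes. The first observation I would record is that for any $\eta \in \Xi_g$ and any even $S \subseteq B$, the quantity $\#(S \circ U_\eta)$ has the same parity as $g+1$: indeed $\#(S \circ U_\eta) \equiv \#S + \#U_\eta \pmod 2$ and $\#U_\eta \equiv g+1 \pmod 4$. Hence the exponent in property \ref{property:parity} is an integer and $e_*(\eta_S) = (-1)^{(g+1-\#(S\circ U_\eta))/2}$. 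Specializing to $g = 3$, where $\#B = 8$, this gives the clean dichotomy: $\eta_S$ is even iff $\#(S \circ U_\eta) \in \{0,4,8\}$, and $\eta_S$ is odd iff $\#(S \circ U_\eta) \in \{2,6\}$.

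For the forward implication, since $\eta$ is associated to $Z$, Theorem \ref{vanishingtheorem} tells us $Z$ satisfies the Vanishing Criterion (\ref{VanishingCondition}) for $\eta$. I would then check that $\eta_{U_\eta}$ is an even vanishing characteristic: taking $S = U_\eta$ gives $\#(U_\eta \circ U_\eta) = 0 \neq 4$, so $\vartheta[\eta_{U_\eta}](Z) = 0$, and the parity formula gives $e_*(\eta_{U_\eta}) = (-1)^{(g+1)/2} = 1$, so $\eta_{U_\eta}$ is even. By Igusa's theorem there is a unique even class on which the theta constant vanishes, namely $\delta$; therefore $\delta = \eta_{U_\eta}$.

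For the converse I would use that the hypothesis ``$Z$ has vanishing even characteristic $\delta$'' already places $Z$ among simple genus $3$ hyperelliptic Jacobians, so Igusa's theorem again guarantees that $\delta$ is the unique even class whose theta constant vanishes. I then verify the biconditional (\ref{VanishingCondition}) for the given $\eta$ on each even $S$. When $\eta_S$ is odd the theta constant vanishes automatically and $\#(S \circ U_\eta) \in \{2,6\} \neq 4$, so both sides agree. When $\eta_S$ is even, $\vartheta[\eta_S](Z) = 0$ iff $\eta_S \equiv \delta = \eta_{U_\eta} \pmod{\Z^6}$; since additivity (property \ref{property:sum}) gives $\eta_{S\circ U_\eta} \equiv \eta_S + \eta_{U_\eta} \pmod{\Z^6}$ and the induced map $\eta\colon G_B \to (1/2)\Z^6/\Z^6$ is a group isomorphism, this happens iff $S \circ U_\eta$ is the identity of $G_B$, i.e. $\#(S \circ U_\eta) \in \{0,8\}$. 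Among even $\eta_S$ this is exactly the condition $\#(S \circ U_\eta) \neq 4$, so the Vanishing Criterion holds for $\eta$.

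I expect the point requiring the most care to be the passage, in the converse, from the equality of classes $\eta_S \equiv \eta_{U_\eta}$ to the combinatorial statement $\#(S \circ U_\eta) \neq 4$; this is where the additivity, the group-isomorphism property, and the parity property of $\Xi_g$ must all be invoked together. In particular one must use $\eta_{U_\eta^c} = \eta_{U_\eta}$ so that both values $\#(S\circ U_\eta) = 0$ and $\#(S\circ U_\eta) = 8$ yield the same class $\delta$; this covers the case $\#U_\eta = 8$ exhibited by $\bar{\eta}$ in Example \ref{etabar}, which is precisely the configuration not treated by Mumford.
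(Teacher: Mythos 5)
Your proof is correct and follows essentially the same route as the paper's: the forward direction takes $S = U_\eta$ in the Vanishing Criterion and invokes uniqueness of the vanishing even characteristic, and the converse is the same case analysis via the parity property (the paper organizes it by the value of $\#(S \circ U_\eta) \in \{0,2,6,8\}$, you by the parity of $\eta_S$, which is equivalent). The one small difference is in your favor: you explicitly verify the non-vanishing half of the biconditional (that $\#(S \circ U_\eta) = 4$ forces $\vartheta[\eta_S](Z) \neq 0$, via the group-isomorphism property and uniqueness of $\delta$), a direction the paper's proof leaves implicit.
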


\begin{proof}
	Let $\eta$ be a map associated to $Z$, with distinguished set $U_{\eta}$. Because $\#(U_{\eta} \circ U_{\eta}) = 0$, by part \ref{property:parity} of Definition \ref{def:Xi}, $e_*(\eta_{U_{\eta}})= (-1)^{4/2} = 1$ and $\eta_{U_{\eta}}$ is an even characteristic. We also have that $\# U_{\eta} = 4$ or $8$ and $\#(U_{\eta} \circ U_{\eta})\neq 4$ so $\vartheta[\eta_{U_{\eta}}](Z) = 0$ by the Vanishing Criterion. Therefore $\eta_{U_{\eta}}$ is the unique vanishing even theta constant.
	
	Now for the converse, suppose that there is a map $\eta$ with $\eta_{U_{\eta}} = \delta$, where $\delta$ is the unique vanishing even characteristic of $Z$. Then we need to show that for any $S$ of even cardinality such that $\#(S\circ U_{\eta}) \neq g+1 =4$, $\vartheta[\eta_S](Z) = 0$.
	
	Because $\#(S \circ U_{\eta}) = \# U_{\eta}  + \# S - 2 \# (S \cap U_{\eta})$, the possibilities for $\#(S \circ U_{\eta})$, excluding $\#(S \circ U_{\eta}) = 4$, are $0$, $2$, $6$ or $8$. If $\#(S \circ U_{\eta}) = 2$ or $6$, then $e_*(\eta_S) = (-1)^{(4 -  \#(U_{\eta} \circ S) )/2} = -1$ by property \ref{property:parity} of Definition \ref{def:Xi}, and so $\vartheta[\eta_S](Z) = 0$ because it is an odd theta constant.
	
	In the case where $\#(S \circ U_{\eta}) = 0$ or $8$, we must have $S = U_{\eta}$ or $S = U_{\eta}^c$, respectively. In that case $\eta_{S} = \eta_{U_{\eta}} = \delta$ and $\vartheta[\eta_S](Z) = 0$ by assumption.
\end{proof}

\subsection{Computing the maps $\eta$}
There are two cases to this computation: We first consider the case where $\delta \neq 0$, and then the case where $\delta = 0$.

\begin{lemma}\label{lemma:sizeU}	Let $Z$ be the period matrix of a simple genus $3$ hyperelliptic Jacobian. Then $\vartheta[0](Z) = 0$ if and only if for every map $\eta$ associated to $Z$, $\# U_{\eta} =  8$.
\end{lemma}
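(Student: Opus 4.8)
The plan is to reduce the statement to a question about the single vector $\eta_{U_\eta}$, which by Proposition~\ref{prop:etaU} is independent of the choice of $\eta$, and then to analyze the two possible values of $\# U_\eta$. First I would observe that $0 \in (1/2)\Z^6/\Z^6$ is an even characteristic, since $e_*(0) = 1$, so that $\vartheta[0](Z) = 0$ is equivalent to $\delta = 0$, where $\delta$ is the vanishing even characteristic of $Z$. By Proposition~\ref{prop:etaU}, $\delta = \eta_{U_\eta}$ for every map $\eta$ associated to $Z$. Thus it suffices to prove, for one such $\eta$, the equivalence $\eta_{U_\eta} = 0$ if and only if $\# U_\eta = 8$; since $\eta_{U_\eta} = \delta$ does not depend on $\eta$, this automatically upgrades to the statement for every $\eta$ and shows that $\# U_\eta$ takes the same value for all associated $\eta$.

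Next I would pin down the possible sizes of $U_\eta$. By property~\ref{property:parity} of Definition~\ref{def:Xi} with $g = 3$ we have $\# U_\eta \equiv 0 \pmod 4$, and by the Remark following that definition $\infty \in U_\eta$, so $\# U_\eta \geq 1$. Since $\# B = 8$, this forces $\# U_\eta \in \{4, 8\}$. The easy direction is then immediate: if $\# U_\eta = 8$ then $U_\eta = B$, and using property~\ref{property:sum} together with the relation $\eta(S) = \eta(S^c)$ and $\eta(\{\infty\}) = 0$ we obtain $\eta_{U_\eta} = \eta_B = \eta_\emptyset = 0$. Hence $\# U_\eta = 8$ implies $\delta = 0$ and $\vartheta[0](Z) = 0$.

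The main work is the converse, where I would show that $\# U_\eta = 4$ forces $\eta_{U_\eta} \neq 0$. Writing $U_\eta = \{i_1, i_2, i_3, \infty\}$ with distinct $i_1, i_2, i_3 \in \{1, \ldots, 7\}$, property~\ref{property:sum} and $\eta_\infty = 0$ give $\eta_{U_\eta} = \eta_{i_1} + \eta_{i_2} + \eta_{i_3}$. The crux, and the step I expect to be the main obstacle, is the structural fact that the only $\F_2$-linear relation among $\eta_1, \ldots, \eta_7$ is the total sum $\sum_{i=1}^7 \eta_i = 0$. This follows from Proposition~\ref{prop:asygetic}: the seven vectors $\eta_i$ span the $6$-dimensional $\F_2$-space $(1/2)\Z^6/\Z^6$, so the natural surjection $\F_2^7 \to (1/2)\Z^6/\Z^6$ sending $e_i \mapsto \eta_i$ has kernel of dimension exactly $1$, and condition (2) of Proposition~\ref{prop:asygetic} identifies this kernel as $\{0, (1,\ldots,1)\}$. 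Consequently a subset sum $\sum_{i \in T} \eta_i$ with $T \subseteq \{1, \ldots, 7\}$ vanishes only for $T = \emptyset$ or $T = \{1, \ldots, 7\}$. As $\{i_1, i_2, i_3\}$ is neither, we conclude $\eta_{U_\eta} \neq 0$, hence $\delta \neq 0$ and $\vartheta[0](Z) \neq 0$. Combining the two directions yields the equivalence, and the observation that $\eta_{U_\eta}$ is independent of $\eta$ delivers the ``for every map $\eta$'' formulation.
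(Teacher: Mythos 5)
Your proof is correct and follows essentially the same route as the paper's: both reduce, via Proposition~\ref{prop:etaU}, to showing that $\eta_{U_{\eta}} = 0$ forces $U_{\eta} = \emptyset$ or $B$, hence $U_{\eta} = B$ since $\infty \in U_{\eta}$. The only cosmetic difference is that the paper invokes the injectivity of the induced map $\eta \colon G_B \to (1/2)\Z^{6}/\Z^{6}$ (property (3) of Definition~\ref{def:Xi}) directly, whereas you re-derive this fact by a dimension count from Proposition~\ref{prop:asygetic} and add a case analysis $\# U_{\eta} \in \{4,8\}$ that the paper bypasses.
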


\begin{proof} For $\xi$ even and any map $\eta$ associated to $Z$, $\vartheta[\xi](Z) = 0$ if and only if $\xi = \eta_{U_{\eta}}$. In turn, $\eta_{S_1} = \eta_{S_2}$ if and only if $S_1 = S_2$ or $S_1 = S_2^c$, and $\eta_{\emptyset} = 0$. This forces $\# U_{\eta} = 0$ or $8$, but since $\infty \in U_{\eta}$, $\#U_{\eta} = 8$.\end{proof}

\begin{remark}
	This shows that when the map $\bar{\eta}$ from Example \ref{etabar} is associated to a period matrix $Z$, then the vanishing even characteristic of $Z$ will be $\delta = 0$, which forces every other map $\eta$ associated to $Z$ to have $\# U_{\eta} = 8$.
\end{remark}

\begin{proposition}\label{prop:cardinality4}
	Suppose that $Z\in \mathcal{H}_3$ is the period matrix of a simple genus 3 hyperelliptic Jacobian and satisfies $\vartheta[\delta](Z) = 0$ for exactly one even characteristic, and $\delta \neq 0$. Then there is $\gamma \in \Gamma_{1,2}$ such that $Z$ satisfies the Vanishing Condition for the map $\eta = \gamma \tilde{\eta}$. Furthermore, this $\gamma$ can be taken to be any such that $\gamma (\begin{smallmatrix} \frac{1}{2}  & \frac{1}{2}  & \frac{1}{2}  & \frac{1}{2}  & 0 & \frac{1}{2}  \end{smallmatrix}) = \delta \pmod{\Z^6}$.
\end{proposition}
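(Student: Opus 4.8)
The plan is to produce the required map by acting on Mumford's map $\tilde{\eta}$ of Example~\ref{MumfordAzygeticSystem} by a suitable $\gamma \in \Gamma_{1,2}$, and then to invoke the converse direction of Proposition~\ref{prop:etaU}. That converse says that if $\eta$ is a map with $\eta_{U_{\eta}} = \delta$, then $Z$ satisfies the Vanishing Condition~(\ref{VanishingCondition}) for $\eta$; so it suffices to exhibit a map of the form $\eta = \gamma\tilde{\eta}$ whose distinguished set has cardinality $g+1=4$ and which satisfies $\eta_{U_{\eta}} = \delta$. The reason for starting from $\tilde{\eta}$ rather than from the map $\bar{\eta}$ of Example~\ref{etabar} is exactly the hypothesis $\delta \neq 0$: a map $\eta$ with $\#U_{\eta} = 8$ has $U_{\eta} = B$ and hence $\eta_{U_{\eta}} = \eta_B = 0 \neq \delta$, whereas $\tilde{\eta}$ has $U_{\tilde{\eta}} = \{2,4,6,\infty\}$ of cardinality $4$ (compare Lemma~\ref{lemma:sizeU}).

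The first step is to control how the distinguished set transforms under $\Gamma_{1,2}$. I would note that $e_*$ is precisely the quadratic form $Q$: writing $\xi \equiv \tfrac12\bar{\xi} \pmod{\Z^{2g}}$ with $\bar{\xi} \in \F_2^{2g}$, one has $e_*(\xi) = (-1)^{Q(\bar{\xi})}$. Since any $\gamma \in \Gamma_{1,2}$ preserves $Q$ modulo $2$ and acts $\F_2$-linearly on $(1/2)\Z^{2g}/\Z^{2g}$, it preserves $e_*$, so that $e_*\bigl((\gamma\tilde{\eta})_i\bigr) = e_*(\tilde{\eta}_i)$ for every $i$. By the description of $U_{\eta}$ recalled after Definition~\ref{def:Xi} as $\{i \in B - \{\infty\} : e_*(\eta_i) = -1\}\cup\{\infty\}$, this forces $U_{\gamma\tilde{\eta}} = U_{\tilde{\eta}}$ for all $\gamma \in \Gamma_{1,2}$. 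Because the $\Sp_{2g}(\Z)$-action is left multiplication on the codomain, we then obtain
\begin{equation*}
(\gamma\tilde{\eta})_{U_{\gamma\tilde{\eta}}} = (\gamma\tilde{\eta})(U_{\tilde{\eta}}) = \gamma\,\tilde{\eta}_{U_{\tilde{\eta}}} \equiv \gamma\left(\tfrac12,\tfrac12,\tfrac12,\tfrac12,0,\tfrac12\right)^{T} \pmod{\Z^6},
\end{equation*}
where the last reduction is the specialization to $g=3$ of the value $\tilde{\eta}_{U_{\tilde{\eta}}}$ computed in Example~\ref{MumfordAzygeticSystem}. Thus the condition $\eta_{U_{\eta}} = \delta$ is literally the condition $\gamma\left(\tfrac12,\tfrac12,\tfrac12,\tfrac12,0,\tfrac12\right) \equiv \delta \pmod{\Z^6}$ of the statement, and any $\gamma \in \Gamma_{1,2}$ realizing it yields an admissible map by Proposition~\ref{prop:etaU}.

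It remains to prove that such a $\gamma$ exists, and this is the crux. Under $(1/2)\Z^6/\Z^6 \cong \F_2^6$, the vector $v = \left(\tfrac12,\tfrac12,\tfrac12,\tfrac12,0,\tfrac12\right)$ maps to a \emph{nonzero isotropic} vector of $Q$ (one checks $Q(\bar v)=0$, consistent with $e_*(v)=1$), and by hypothesis $\delta$ is a nonzero even characteristic, hence also a nonzero isotropic vector. Since $\Gamma_{1,2}/\Gamma_2$ is the orthogonal group of the nondegenerate form $Q$ over $\F_2$, the existence of $\gamma$ reduces to transitivity of this group on nonzero isotropic vectors, which is Witt's extension theorem: the isometry of isotropic lines $\langle \bar v\rangle \to \langle \delta\rangle$ extends to the full space. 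I would then lift the resulting element of $\Gamma_{1,2}/\Gamma_2$ to an integral $\gamma \in \Gamma_{1,2}$, using that the action on $(1/2)\Z^6/\Z^6$ factors through reduction modulo $2$.

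The main obstacle I anticipate is precisely this transitivity claim. The full group $\Sp_6(\F_2)$ is evidently transitive on nonzero vectors, but we are constrained to stay inside the smaller group $\Gamma_{1,2}/\Gamma_2$ in order to keep $U_{\tilde{\eta}}$ fixed of cardinality $4$, and one must verify that this subgroup remains transitive on the nonzero \emph{isotropic} vectors. For $g=3$ this can be made a finite check via the exceptional identification of the relevant orthogonal group with a symmetric (respectively alternating) group acting on the $35$ partitions of an $8$-element set into two sets of four; this also shows that the special-orthogonal-versus-orthogonal distinction flagged after the definition of $\Gamma_{1,2}$ is harmless here, since both groups act transitively.
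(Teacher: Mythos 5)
Your proof is correct, but it gets the key existence step by a genuinely different mechanism than the paper. The paper argues in the opposite direction: since $Z$ is hyperelliptic, Theorem~\ref{vanishingtheorem} already supplies \emph{some} associated map $\eta^*$; since $\delta \neq 0$, Lemma~\ref{lemma:sizeU} forces $\# U_{\eta^*} = 4$, so exactly three of the values $\eta^*_i$ are odd, and after relabelling the branch points one may assume $e_*(\eta_i) = e_*(\tilde{\eta}_i)$ for all $i$. The unique $\gamma \in \Sp_6(\F_2)$ with $\gamma\tilde{\eta} = \eta$ (Proposition~\ref{prop:igusa}) is then shown to preserve $e_*$ --- by expanding an arbitrary $\xi$ in the basis $\tilde{\eta}_1, \ldots, \tilde{\eta}_6$ and using $e_2(\tilde{\eta}_i,\tilde{\eta}_j) = -1$ --- hence lies in $\Gamma_{1,2}$, and finally $\gamma \left(\begin{smallmatrix} \frac{1}{2} & \frac{1}{2} & \frac{1}{2} & \frac{1}{2} & 0 & \frac{1}{2}\end{smallmatrix}\right) = \eta_{U_{\eta}} = \delta$ by the forward direction of Proposition~\ref{prop:etaU}. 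So the paper extracts existence of $\gamma$ from the hyperelliptic geometry (an associated map exists and can be normalized by relabelling branch points), whereas you extract it from pure group theory: transitivity of $\Gamma_{1,2}/\Gamma_2$, viewed as the orthogonal group of $Q$ over $\F_2$, on the $35$ nonzero singular vectors, via Witt's extension theorem in characteristic $2$ or the finite model of $S_8$ acting on partitions of an $8$-set into two $4$-sets; your observation that the special-versus-full orthogonal group ambiguity is harmless (since $A_8$ is also transitive on the $35$ partitions) is correct and worth making. Each route buys something: yours directly establishes the ``furthermore, $\gamma$ can be taken to be \emph{any} such'' clause --- which is what Algorithm~\ref{ConstructionCurves} actually uses --- whereas the paper's written proof produces one particular $\gamma$, and upgrading it to the ``any'' clause requires precisely your argument ($\Gamma_{1,2}$ preserves $e_*$, hence $U_{\gamma\tilde{\eta}} = U_{\tilde{\eta}}$ and $(\gamma\tilde{\eta})_{U_{\tilde{\eta}}} = \gamma\,\tilde{\eta}_{U_{\tilde{\eta}}} = \delta$, then the converse of Proposition~\ref{prop:etaU}). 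Conversely, the paper's route avoids importing any orthogonal-group transitivity and shows the stronger fact that the resulting map is honestly \emph{associated} to $Z$, i.e., arises from a labelling of the branch points, rather than merely satisfying the Vanishing Criterion.
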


\begin{proof}
	Since $Z$ is the period matrix of a hyperelliptic Jacobian, there are several maps in $\Xi_g$ such that $Z$ satisfies the Vanishing Condition for these maps. Choose any such and denote it $\eta^*$. Then the cardinality of the distinguished set $U_{\eta^*}$ is $4$ by Lemma \ref{lemma:sizeU}.
	
	Recall that $\eta^*_i = AJ(P_i - P_{\infty})$, by equation (\ref{eq:2torsion}). Relabel the points $\{P_1,\ldots,P_7\}$ so that if $e_*(AJ(P_i - P_{\infty})) = 1$, then $i \in \{1,3,5,7\}$ and if $e_*(AJ(P_i - P_{\infty})) = -1$ then $i \in \{2,4,6\}$. This is possible because exactly three values of $i \in \{1,\ldots,7\}$ are such that $e_*(\eta^*_i) = -1$. This relabelling gives rise to a different map $\eta$ which is still associated to $Z$.
	
	We now have $e_*(\eta_i) = e_*(\tilde{\eta}_i)$ for each $i$, and there is $\gamma\in \Sp_{6}(\F_2)$ with $\gamma \tilde{\eta} = \eta \pmod{\Z^6}$. We show that in fact $\gamma \in \Gamma_{1,2}/\Gamma_2$ by showing that for all $\xi \in (1/2)\Z^6/\Z^6$, $e_*(\gamma \xi) = e_*(\xi)$.
	
	For any class $\eta \in \Xi_3$, the values $\eta_i$ for $i = 1, \ldots 6$ form a basis of the $\mathbb{F}_2$-vector space $(1/2)\Z^{6}/\Z^{6}$. Therefore any $\xi \in (1/2)\Z^{6}/\Z^{6}$ can be written as a sum of elements in this basis, say $\xi = \sum_{k \in S} \tilde{\eta}_k$, and since $e_2(\tilde{\eta}_i,\tilde{\eta}_j) = -1$ whenever $i \neq j$,
	\begin{equation*}
	e_*(\xi) = (-1)^{\binom{\# S}{2}} \prod_{k\in S} e_*(\tilde{\eta}_k).
	\end{equation*}
	On the other hand, $\gamma \xi = \gamma\sum_{k \in S} \tilde{\eta}_k = \sum_{k \in S} \gamma\tilde{\eta}_k$ and applying the same argument to the map $\eta$, we have
	\begin{equation*}
	e_*(\gamma\xi) = (-1)^{\binom{\# S}{2}} \prod_{k\in S} e_*(\gamma\tilde{\eta}_k).
	\end{equation*}
	But $e_*(\gamma\tilde{\eta}_k) = e_*(\eta_k) = e_*(\tilde{\eta}_k)$ by assumption and so $e_*(\gamma\xi)=e_*(\xi)$ and $\gamma \in \Gamma_{1,2}$.	
	
	We also have
	\begin{equation*}
	\eta_U = \sum_{i \in U_{\eta}} \eta_i = \sum_{i \in U_{\tilde{\eta}}} \gamma \tilde{\eta_i} = \gamma \sum_{i \in U_{\tilde{\eta}}}\tilde{\eta_i} = \gamma (\begin{smallmatrix} \frac{1}{2}  & \frac{1}{2}  & \frac{1}{2}  & \frac{1}{2}  & 0 & \frac{1}{2}  \end{smallmatrix}) = \delta,
	\end{equation*}
	which completes the proof.
\end{proof}

 \begin{proposition}
 	Suppose that $Z\in \mathcal{H}_3$ is irreducible and satisfies $\vartheta[0](Z) = 0$. Then $Z$ satisfies the Vanishing Criterion for $\bar{\eta}$ defined in Example \ref{etabar}.
 \end{proposition}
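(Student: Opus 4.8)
The plan is to identify $Z$ as a hyperelliptic Jacobian whose vanishing even characteristic is $\delta = 0$, and then to apply the converse direction of Proposition \ref{prop:etaU} to the specific map $\bar{\eta}$ of Example \ref{etabar}. First I would argue that $Z$ is indeed a hyperelliptic period matrix. Since $Z$ is symplectically irreducible, Igusa's form $\Sigma_{140}(Z)$ is nonzero, so at most one even theta constant of $Z$ vanishes up to equivalence modulo $\Z^6$. As $e_*(0) = 1$, the zero characteristic is even, and the hypothesis $\vartheta[0](Z) = 0$ then shows that exactly one even theta constant vanishes; hence the vanishing even characteristic of $Z$ is $\delta = 0$. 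Moreover $\chi_{18}(Z) = 0$, since $\vartheta[0](Z)$ is one of its factors, so by Igusa's criterion the irreducible matrix $Z$ must be the period matrix of a hyperelliptic Jacobian.

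Next I would compute $\bar{\eta}_{U_{\bar{\eta}}}$. By Example \ref{etabar} we have $U_{\bar{\eta}} = B$, and since $\bar{\eta} \in \Xi_3$, the relation $\bar{\eta}(S) = \bar{\eta}(S^c)$ of Definition \ref{def:Xi} together with property \ref{property:sum} gives $\bar{\eta}_{B} = \bar{\eta}_{B^c} = \bar{\eta}_{\emptyset} = 0$; equivalently, one adds the seven listed values $\bar{\eta}_1, \ldots, \bar{\eta}_7$ coordinatewise modulo $\Z$ and checks that every coordinate receives an even number of halves. Either way $\bar{\eta}_{U_{\bar{\eta}}} = 0 = \delta$. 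With $Z$ having unique vanishing even characteristic $\delta$ equal to $\bar{\eta}_{U_{\bar{\eta}}}$, the converse part of Proposition \ref{prop:etaU} yields that $Z$ satisfies the Vanishing Criterion (\ref{VanishingCondition}) for $\bar{\eta}$, which is the claim.

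The point requiring care---more a matter of bookkeeping than a genuine obstacle---is that $\bar{\eta}$ has $\# U_{\bar{\eta}} = 8$ rather than $4$, so this is precisely the case not covered by Mumford's Theorem 9.1. I would therefore stress that the proof of the converse of Proposition \ref{prop:etaU} treats $\# U_{\eta} \in \{4, 8\}$ uniformly: for $\# S$ even with $\#(S \circ U_{\bar{\eta}}) \neq 4$, property \ref{property:parity} of Definition \ref{def:Xi} makes $\bar{\eta}_S$ odd when $\#(S \circ U_{\bar{\eta}}) \in \{2, 6\}$, while $\#(S \circ U_{\bar{\eta}}) \in \{0, 8\}$ forces $S \in \{U_{\bar{\eta}}, U_{\bar{\eta}}^c\}$ and hence $\bar{\eta}_S = \delta$, so $\vartheta[\bar{\eta}_S](Z) = 0$ in all these cases; conversely $\#(S \circ U_{\bar{\eta}}) = 4$ makes $\bar{\eta}_S$ an even characteristic distinct from $\delta$, hence non-vanishing. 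A final remark I would make is that although Proposition \ref{prop:etaU} is stated for simple hyperelliptic Jacobians, every ingredient used above depends only on symplectic irreducibility, through the non-vanishing of $\Sigma_{140}$ and the resulting uniqueness of the vanishing even characteristic, so the argument is valid under the weaker hypothesis of the present statement.
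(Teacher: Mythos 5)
Your proof is correct and follows essentially the same route as the paper: compute $\bar{\eta}_{U_{\bar{\eta}}} = \sum_{i \in B} \bar{\eta}_i = 0$ using $U_{\bar{\eta}} = B$, then invoke the converse direction of Proposition \ref{prop:etaU}. The additional care you take---verifying via $\Sigma_{140}$ and $\chi_{18}$ that $Z$ is hyperelliptic with unique vanishing even characteristic $\delta = 0$, and checking that the converse argument treats $\# U_{\bar{\eta}} = 8$ uniformly---simply makes explicit hypotheses that the paper's two-line proof leaves implicit.
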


 \begin{proof}
 	By Proposition \ref{prop:etaU}, it suffices to show that for the map $\bar{\eta}$, $\bar{\eta}_{U_{\bar{\eta}}} = 0$. This follows since $U_{\bar{\eta}} = B$, and $\sum_{i \in B} \bar{\eta}_i = 0$.
 \end{proof}

\begin{remark} \label{compareWeng}
In~\cite{WengGenus3}, Weng computes the Rosenhain invariants of hyperelliptic curves using formulae similar to those of Takase. In particular, her formulae also depend on the $\Gamma_2$-equivalence class of the period matrix. In this article, we give this dependence in terms of a map $\eta$ associated to the period matrix, which is computed as $\eta = \gamma \tilde{\eta}$ for a matrix $\gamma \in \Sp_6(\Z)$ that depends on the vanishing even theta constant of the period matrix. Weng instead gives an explicit table (\cite[Table 1]{WengGenus3}) of how her formulae should be modified to account for the vanishing even theta constant. This table enumerates the specific theta constants that must be substituted into the formulae giving each of the five Rosenhain invariants, for each of the thirty-six possible vanishing even theta constants. Although no mention is given in her paper of how this table was obtained, we believe that she uses the approach given by Weber~\cite{weber}.

We were unable to use this table in our computations for two reasons. The first is that both Weber and Weng define the set $U$ to be $\{1,3,5,7\}$, without mention of its dependence on the map $\eta$. (We note that since $\eta_{S} = \eta_{S^c}$, this is equivalent to $U = \{2, 4, 6, \infty\}$, which is the convention that we have adopted in this paper.) As is shown in the proof of Proposition \ref{prop:cardinality4}, often $\eta$ can be chosen such that $U_{\eta} = \{1,3,5,7\}$, but there is no mention by Weber that the asygetic system used in the computation has this property. Secondly, this is not always possible, as demonstrated in Example \ref{etabar}.
It may be that these considerations were taken into account in the computation of the table given by Weng, and that this was simply omitted in the text. However, since we could not be sure of it, using the table would have involved verifying each of its $540$ entries. Instead, we chose to provide the reader with a proof of correctness for the whole method.
\end{remark}

\section{Implementation, examples and results}\label{results}

We implemented the algorithms described here in Sage and PARI/GP; our code is available at \cite{github}. Our search for hyperelliptic curves and their construction uses Algorithm~\ref{ConstructionCurves}.

\begin{algorithm}[h]
\caption{Computing Rosenhain coefficients}
\label{ConstructionCurves}
\begin{algorithmic}[1]
\Require A sextic CM field $K$, and precision $prec$
\Ensure Rosenhain coefficients for all hyperelliptic curves with CM by $K$, if any exist.
\State Compute all period matrices $Z$ having CM by $K$ (with precision $prec$) using Algorithm~\ref{alg:periodmatrix}.
\For{each period matrix $Z$}
	\State Compute all even theta constants and the set $T$ of characteristics for which the theta constants are $\leq 10^{prec}$.
	\If{$T$ has exactly one element $\delta$}
		\If{$\delta \neq 0$}
			\State Compute $\gamma \in \Gamma_{1,2}$ such that $\delta=\gamma (\begin{smallmatrix} \frac{1}{2}  & \frac{1}{2}  & \frac{1}{2}  & \frac{1}{2}  & 0 & \frac{1}{2}  \end{smallmatrix})$. This can be computed once and for all and stored for each of the $35$ possible $\delta$s.
		\ElsIf{$\delta = 0$}
			\State Let $\gamma = \bar{\gamma}$ from Equation~\eqref{eq:gammabar}
		\EndIf
		\State Compute the Rosenhain coefficients with precision $prec$ using the formulae from Theorem~\ref{Th:Takase}, and $\eta_i=\gamma \tilde{\eta_i}$, for all $i\in \{3,\ldots, 7\}$.
\EndIf
\EndFor
\end{algorithmic}
\end{algorithm}

The software implements the different steps in Algorithm~\ref{ConstructionCurves} as follows:

\begin{itemize}
\item Algorithm~\ref{alg:periodmatrix} is implemented in Sage \cite{Sage} and all computations are done symbolically. The running time of this step is negligible compared to the following steps.

\item The computation of theta constants is performed by a PARI/GP program. This is the most time-consuming part of the algorithm. Indeed, in order to compute a theta constant, we approximate
$\theta[\xi](Z)$ by
\begin{equation*}
S_{\xi,B}=\sum _{n\in [-B,B]^3} \exp \left (\pi i((n+\frac{1}{2}\xi_1)^T Z(n+\frac{1}{2}\xi_1)+2(n+\frac{1}{2}\xi_1)^T(\frac{1}{2}\xi_2))\right ),
\end{equation*}
with $B>0$. To ensure that our computation is correct up to $N$ bits of precision, we would need to estimate the error bound as a function of $B$ and $N$. In genera 1 and 2, this was previously done by computing with period matrices in the fundamental domain (see~\cite{Gottschling,Dupont}). In genus 3, no method for computing matrices in the fundamental domain is known. To make sure we computed correctly with precision $t$, we computed $S_{\xi,B}$ for several values of $B$ until we obtained $|S_{\xi,B'}-S_{\xi,B}|<2^{-t}$ for two consecutive values $B'>B$.

\item To recognize the values of the Rosenhain coefficients $a_i$ as algebraic integers we use the algebraic dependence testing algorithm~\cite{Cohen}, implemented in PARI/GP by the function \texttt{algdep}. We obtain a conjectured minimal polynomial $\lambda_i$ for each coefficient $a_i$. Note that the amount of precision needed for this computation to end successfully depends on the dimension of the lattice fed to \texttt{algdep}, i.e., on the degree of the minimal polynomial of the Rosenhain coefficients. This degree depends on the class number of $K$ (see~\cite{Costello} for details). In practice, since we only computed with sextic fields of class number one, 53 bits of precision sufficed, and the degrees of the polynomials were at most $12$. However, we expect the amount of precision needed for this computation to increase dramatically once the class number of $K$ is increased.

\item In order to heuristically check the correctness of our Rosenhain minimal polynomials, we choose a prime $p$ such that there is an unramified prime ideal $\mathfrak{p}$ of degree 1 over $p$ in $\mathcal{O}_{K^r}$ (the ring of integers of the reflex field) and such that there is $\pi\in K$ with $\pi\bar{\pi}=p$.  
By a theorem of Shimura~\cite[Theorem 2 in Sect. 13]{Shimura}, the reduction of an abelian variety with CM by $\mathcal{O}_K$ is an abelian variety over $\F_p$ with maximal complex multiplication. We compute Weil numbers $\pi$ which correspond to the Frobenius endomorphism on a curve defined over $\F_p $ whose Jacobian has CM by $\mathcal{O}_K$. The Rosenhain invariants of such the hyperelliptic curve should be roots of the polynomials we have computed (modulo $p$). So we loop through all roots of the $5$ minimal polynomials until we find a curve whose Jacobian has the right number of points.  Specifically, let $n$ be the degree of the Rosenhain polynomials (in all our class number one computations this was 3, 6 or 12). We construct all $n^5$ curves obtained in this way and check whether the Jacobian of the curve has cardinality equal to $N_{K/\Q}(1-\pi)$. For a higher degree of certainty one can compute the zeta function, but when $p$ is large enough the heuristic check we use is unlikely to coincidentally give the correct cardinality unless the minimal polynomials of the Rosenhain invariants that we have computed are correct.
\end{itemize}

\subsection{Galois CM sextic fields with class number 1 giving hyperelliptic curves}\label{list}

There are $17$ sextic CM fields $K$ with class number 1 that are Galois over $\mathbb{Q}$.  Of these, four admit a hyperelliptic Jacobian. They are as follows:
\begin{enumerate}
\item $K = \Q(\zeta_7)$
\item \label{field2} $K = \Q[X]/(X^6 + 5X^4 + 6X^2 + 1)$
\item $K = \Q[X]/(X^6 + 6X^4 + 9X^2 + 1)$
\item $K = \Q[X]/(X^6 + 13X^4 + 50X^2 + 49)$
\end{enumerate}

For each of these fields, there is a single isomorphism class of ppav with CM by $K$. All of these examples were found by Weng \cite{WengGenus3} or, in the case of $\Q(\zeta_7)$, have long been known. We conjecture that these four examples are all of the hyperelliptic curves with CM by a Galois sextic field having class number one.

\begin{example}\label{Ex:Weng}
	Let $K$ be field number \ref{field2} above.
	The tuple $(\lambda_3(x),\lambda_4(x),\lambda_5(x),\lambda_6(x),\lambda_7(x))$ of minimal polynomials for the Rosenhain coefficients is:
	\begin{align*}
	&(x^3 + 22x^2 - 16x - 8, x^3 - 4x^2 + 3x + 1, -8x^3 + 8x^2 + 2x -1,\\
	&\;\; x^3 - 9x^2 - x + 1,x^3 + 2x^2 - x - 1).
	\end{align*}

For this field, Weng computed the minimal polynomials of the Shioda invariants (the class polynomials) $(h_1(x),h_2(x),h_3(x),h_4(x),h_5(x)).$ These polynomials have degree one, but their coefficients are larger than those of the minimal polynomials of the Rosenhain invariants: \begin{align*}&(1048576x-2187, 131072x-24373629, 16384x+11632436487,\\&\;\; 16384000000000x+2952169653573, 2048000000000000x-1168038669244419).\end{align*}
\end{example}

This is an example of a phenomenon that is well-understood in genus $2$. Indeed, as noted in~\cite{Costello} when $K$ is a quartic CM field, the Rosenhain invariants of an abelian surface with CM by $K$ are defined over $CM_K(2)$, the class field corresponding to the quotient group of fractional ideals of the reflex field $K^r$ which are prime to 2, modulo fractional ideals $\mathfrak{b}$, with the property that $N_{\Phi}(\mathfrak{b})=\alpha\mathcal{O}_K,~\alpha\equiv 1 \pmod 2, \alpha\bar{\alpha}=N_{K/\Q}(\mathfrak{b})$ (where $N_{\Phi}$ is the typenorm corresponding to the CM type $\Phi$). The Igusa invariants, on the other hand, are defined over a class field $CM_K(1)$ of modulus 1. An analysis of the exact numerical relationship between the degrees of these extension fields in this setting is given in \emph{loc. cit.}

As supported by the data of Example~\ref{Ex:Weng}, we expect that similarly in the sextic case the degrees of the Rosenhain polynomials are larger than those of the Shioda polynomials, but that their coefficients are smaller. This is however beyond the scope of this paper.

\begin{example}
	Let $K = \Q(\zeta_7)$. This example is classical: One can compute that there is a single ppav which is simple over $\mathbb{C}$ and with CM by the full ring of integers of $\Q(\zeta_7)$, up to isomorphism over $\mathbb{C}$. This abelian variety is a hyperelliptic Jacobian, and a model for the hyperelliptic curve is given in~\cite{Shimura}.

	We obtain the tuple of Rosenhain minimal polynomials
	\begin{align*}&(x^6 - 5x^5 + 11x^4 - 13x^3 + 9x^2 - 3x + 1, x^6 - 2x^5 + 4x^4 - 8x^3 + 9x^2 - 4x + 1,\\
	&\;\; x^6 - x^5 + x^4 - x^3 + x^2 - x + 1, x^6 - 3x^5 + 9x^4 - 13x^3 + 11x^2 - 5x + 1, \\
	&\;\; x^6 - 4x^5 + 9x^4 - 8x^3 + 4x^2 - 2x + 1).\end{align*}
\end{example}

Despite our focus in this work on $K$ Galois of class number $1$, our algorithm works for $K$ with Galois closure $L$ with any $\Gal(L/\Q)$, and $K$ of any class number. Here we show an example with $\Gal(L/\Q) \cong \Z/2\Z \times S_3$ and $K$ is of class number $1$:
\begin{example}
Let $K = \Q[X]/(X^6 + 9X^4 + 18X^2 + 1)$. Then $\Gal(L/\Q) \cong \Z/2\Z \times S_3$.
 We obtain the following tuple of  minimal polynomials for the Rosenhain coefficients:
\begin{align*}(x^3 - &69x^2 + 198x - 49, 8x^3 - 448x^2 - 2042x + 2401, x^3 - 43x^2 - 606x - 441,\\
&x^3 - 169x^2 + 6479x + 2401, x^3 - 58x^2 - 96x + 72).\end{align*}
\end{example}

\section*{Acknowledgements}\label{ackref}
This collaboration started at the workshop Sage Days 42: Women in Sage, and we are grateful to Sarah Chisholm for her contribution toward an early version of the algorithm. We thank L\'eo Ducas and Enea Milio for helpful discussions and the anonymous reviewers of the ANTS 2016 conference for their comments. The second author thanks Microsoft Research and the University of California, San Diego for hospitality during a visit when part of this work was performed. We also thank ICERM for facilitating a productive working environment which allowed us to complete this project. This work was supported in part by NSF grant DMS-1103831 and the French ANR Project ANR-12-INSE-0014 SIMPATIC.

\appendix
\section{An algorithm for computing period matrices\\ of abelian varieties with CM}\label{periodmatrixalgorithm}

It is well known that all abelian varieties with CM by a given field $K$ have complex points given by $\C^g/\Phi(\mathfrak{a})$, for $\mathfrak{a}$ a fractional ideal of $K$ and $\Phi$ a CM type. To identify which ones are principally polarizable, we must verify if the lattice $\Phi(\mathfrak{a})$ admits a principal polarization. Spallek \cite{Spallek}, based on the work of Shimura and Taniyama, shows that this is the case if and only if there is $\xi \in K$ such that $-\xi^2$ is totally positive in $K_0$, the totally real subfield of $K$ of degree $g$, $\Im(\phi_i(\xi))>0$ for $i = 1, \ldots, g$, and the ideal $(\mathcal{D}_{K/\Q}\mathfrak{a}\mathfrak{\bar{a}})^{-1}$, for $\mathcal{D}_{K/\Q}$ the different of $K$, is principal and generated by $\xi$.

Let $U_{K}$ denote the group of units of $K$, let $U^{+}$ be the subgroup of totally positive units of the group of units of the totally real subfield $K_0$, and let $U_1$ be the subgroup of $U^{+}$ containing only units of the form $\epsilon \bar{\epsilon}$ for $\epsilon \in \O_K^{\times}$. Then to find a suitable $\xi$ given an ideal $\mathfrak{a}$ such that $(\mathcal{D}_{K/\Q}\mathfrak{a}\mathfrak{\bar{a}})^{-1}$ is principal and a generator $b$ of this principal ideal, it is enough to multiply $b$ by a set of coset representatives of $U_K/U^{+}$. If we can find one such suitable $\xi$, all different possibilities -- each giving a different principal polarization -- differ from this first element by an element of the quotient $U^{+}/U_1$, by a theorem of van Wamelen \cite[Theorem 5]{vanwamelen}. In their paper, Koike and Weng \cite{koikeweng} give a procedure to compute representatives for the quotient groups $U_K/U^{+}$ and $U^{+}/U_1$, which we also use.

For completeness, we include this algorithm here since it is part of the code used to obtain the results of this paper.
\begin{algorithm}[h]
	\caption{Generating data for period matrices}
	\label{alg:periodmatrix}
	\begin{algorithmic}[1]
		\Require A sextic CM field $K$
		\Ensure A list of tuples $(\Phi, \mathfrak{a}, \xi)$ for each isomorphism class of simple ppav with CM by $K$
		\State Compute a representative $\Phi$ for each equivalence class of equivalent primitive CM types of $K$
		\State Run through the ideal class group of $K$ and compute a representative $\mathfrak{a}$ for each ideal class such that $(\mathfrak{a}\bar{\mathfrak{a}}\mathcal{D}_{K/\mathbb{Q}})^{-1}$ is principal, and a generator $b$ of the ideal $(\mathfrak{a}\bar{\mathfrak{a}}\mathcal{D}_{K/\mathbb{Q}})^{-1}$
		\For{each triple $(\Phi,\mathfrak{a}, b)$}
			\State Running through representatives $\{u_1, \ldots, u_e\}$ of the finite quotient $U_K/U^{+}$, check if $u_j b$ satisfies the conditions for $\xi = u_j b$ to give a principal polarization for any $j$
			\If{such a $u_j$ can be found, write $\xi = u_j b$}
				\State Output the pairs $(\Phi(\mathfrak{a}), \epsilon_i \xi)$ for $\epsilon_i$ running through representatives of the finite quotient $U^{+}/U_1$. These are the data of all of the nonisomorphic ppav with underlying torus $\C^g/\Phi(\mathfrak{a})$
			\ElsIf{none is found}
				\State There is no ppav with underlying torus $\C^g/\Phi(\mathfrak{a})$
			\EndIf
		\EndFor
	\end{algorithmic}
\end{algorithm}

From the data $(\Phi, \mathfrak{a}, \xi)$, we obtain a period matrix for the ppav by asking Sage for a basis of $\mathfrak{a}$ that is symplectic under the polarization $E_{\xi}$, which is given explicitly in terms of $\xi$ in \cite[p. 19]{lang}. A matrix $[\Omega_1,\Omega_2]$ of size $3 \times 6$ is created by embedding the six elements of this symplectic basis into $\C$ using the three embeddings contained in $\Phi$. The period matrix $Z$ is then $\Omega_2^{-1}\Omega_1$.

\section{Mumford's choice of symplectic basis and labeling of the branch points}\label{pathpicture}

We give here a representation of the symplectic basis and labeling of the branch points used by Mumford to carry out his computations. The image is  a projection of the paths to $\mathbb{P}^1(\C)$ under the map $(x,y) \mapsto x$. The branch points are labeled $a_1, a_2, \ldots a_{2g+1}, \infty$, the paths giving the symplectic basis for homology are labeled $A_i$, $B_i$, and the $\gamma_i$'s are the branch cuts of the projection. This image was created by R. Cosset for his PhD thesis \cite{Cossette}.

\begin{figure}
\begin{center}
\includegraphics[height=3in]{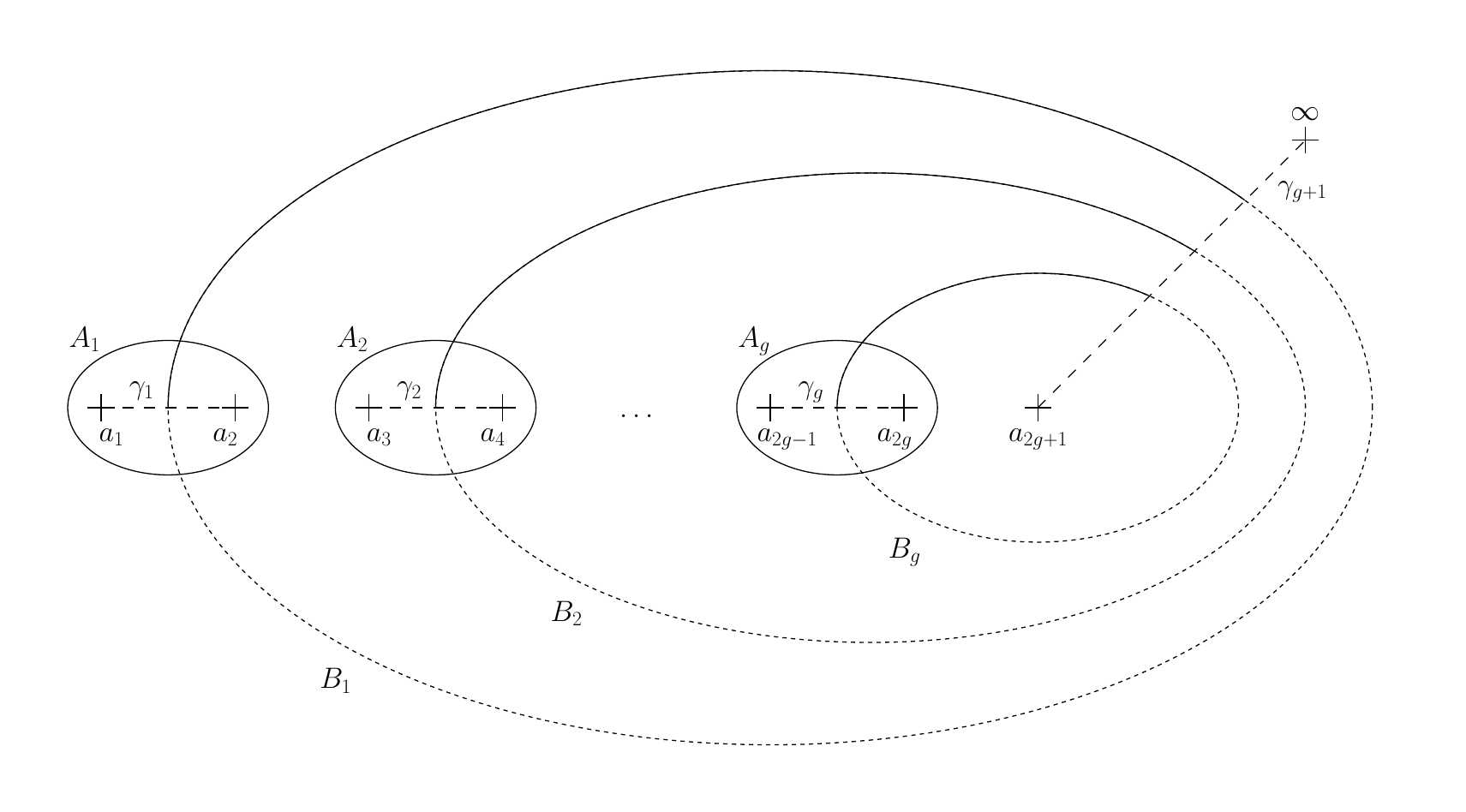}
\end{center}
\end{figure}
\pagebreak

\bibliography{biblio}{}

\providecommand{\bysame}{\leavevmode\hbox to3em{\hrulefill}\thinspace}
\providecommand{\MR}{\relax\ifhmode\unskip\space\fi MR }
% \MRhref is called by the amsart/book/proc definition of \MR.
\providecommand{\MRhref}[2]{%
  \href{http://www.ams.org/mathscinet-getitem?mr=#1}{#2}
}
\providecommand{\href}[2]{#2}
\begin{thebibliography}{10}

\bibitem{github}
J.\thinspace{}S. Balakrishnan, S.~Ionica, K.~Lauter, and C.~Vincent,
  \emph{Genus 3}, \url{https://github.com/christellevincent/genus3}, 2016.

\bibitem{Birkenhake}
C.~Birkenhake and H.~Lange, \emph{Complex abelian varieties}, second ed.,
  Grundlehren der Mathematischen Wissenschaften [Fundamental Principles of
  Mathematical Sciences], vol. 302, Springer-Verlag, Berlin, 2004.

\bibitem{Cohen}
H.~Cohen, \emph{A course in computational algebraic number theory}, Graduate
  Texts in Mathematics, vol. 138, Springer-Verlag, Berlin, 1993.

\bibitem{Cossette}
R.~Cosset, \emph{Applications des fonctions th{\^{e}}ta {\`{a}} la
  cryptographie sur les courbes hyperelliptiques}, Ph.D. thesis,
  Universit{\'{e}} Henri Poincar{\'{e}} -- Nancy I, 2011.

\bibitem{Costello}
C.~Costello, A.~Deines-Schartz, K.~Lauter, and T.~Yang, \emph{Constructing
  abelian surfaces for cryptography via {R}osenhain invariants}, LMS J. Comput.
  Math. \textbf{17} (2014), no.~suppl. A, 157--180.

\bibitem{Diem}
C.~Diem, \emph{An index calculus algorithm for plane curves of small degree},
  Algorithmic {Number Theory}- ANTS VII, LLNCS, no. 4076, Springer, 2006.

\bibitem{Dupont}
R.~Dupont, \emph{Moyenne arithm\'etico-g\'eom\'etrique, suites de {Borchardt}
  et applications}, Ph.D. thesis, \'Ecole {Polytechnique}, 2006.

\bibitem{Gaudry}
P.~Gaudry, E.~Thom\'e, N.~Th\'eriault, and C.~Diem, \emph{A double large prime
  variation for small genus hyperelliptic index calculus}, Math.{Comp.}
  \textbf{76} (2007), 475--492.

\bibitem{Gottschling}
E.~Gottschling, \emph{Explizite {B}estimmung der {R}andfl\"achen des
  {F}undamentalbereiches der {M}odulgruppe zweiten {G}rades}, Math. Ann.
  \textbf{138} (1959), 103--124.

\bibitem{grossharris}
B.~Gross and J.~Harris, \emph{On some geometric constructions related to theta
  characteristics}, pp.~279--311, Johns Hopkins Univ. Press, Baltimore, MD,
  2004.

\bibitem{igusa67}
J.~Igusa, \emph{Modular forms and projective invariants}, Amer. J. Math.
  \textbf{89} (1967), 817--855.

\bibitem{igusa}
\bysame, \emph{Theta functions}, Springer-Verlag, New York-Heidelberg, 1972,
  Die Grundlehren der mathematischen Wissenschaften, Band 194.

\bibitem{koikeweng}
K.~Koike and A.~Weng, \emph{Construction of {CM} {P}icard curves}, Math. Comp.
  \textbf{74} (2005), no.~249, 499--518.

\bibitem{Laine}
K.~Laine and K.~Lauter, \emph{Time-memory trade-offs for index calculus in
  genus 3}, J. Math. Crypt. \textbf{9} (2015), no.~2, 95–--114.

\bibitem{lang}
S.~Lang, \emph{Complex multiplication}, Grundlehren der Mathematischen
  Wissenschaften [Fundamental Principles of Mathematical Sciences], vol. 255,
  Springer-Verlag, New York, 1983.

\bibitem{Mumford1}
D.~Mumford, \emph{Tata lectures on theta. {I}}, Modern Birkh\"auser Classics,
  Birkh\"auser Boston, Inc., Boston, MA, 2007, With the collaboration of C.
  Musili, M. Nori, E. Previato and M. Stillman, Reprint of the 1983 edition.

\bibitem{Mumford}
\bysame, \emph{Tata lectures on theta. {II}}, Modern Birkh\"auser Classics,
  Birkh\"auser Boston, Inc., Boston, MA, 2007, Jacobian theta functions and
  differential equations, With the collaboration of C. Musili, M. Nori, E.
  Previato, M. Stillman and H. Umemura, Reprint of the 1984 original.

\bibitem{poor}
C.~Poor, \emph{The hyperelliptic locus}, Duke Math. J. \textbf{76} (1994),
  no.~3, 809--884.

\bibitem{Shimura}
G.~Shimura and Y.~Taniyama, \emph{Complex multiplication of abelian varieties
  and its applications to number theory}, Publications of the Mathematical
  Society of Japan, vol.~6, The Mathematical Society of Japan, Tokyo, 1961.

\bibitem{Smith}
B.~Smith, \emph{{Isogenies and the Discrete Logarithm Problem in Jacobians of
  Genus 3 Hyperelliptic Curves}}, {Journal of Cryptology} \textbf{22} (2009),
  no.~4, 505--529.

\bibitem{Spallek}
A.-M. Spallek, \emph{Kurven von {Geschlecht 2 und ihre Anwendung in Public Key
  Kryptosystemen}}, Ph.D. thesis, Institut f\"ur {Experimentelle Mathematik,
  Universit\"at GH Essen}, 1994.

\bibitem{Sage}
W.\thinspace{}A. Stein et~al., \emph{{S}age {M}athematics {S}oftware ({V}ersion
  6.10)}, The Sage Development Team, 2015, {\tt http://www.sagemath.org}.

\bibitem{streng}
M.~Streng, \emph{Complex multiplication of abelian surfaces}, Ph.D. thesis,
  Universiteit Leiden, 2010.

\bibitem{Takase}
K.~Takase, \emph{A generalization of {R}osenhain's normal form for
  hyperelliptic curves with an application}, Proc. Japan Acad. Ser. A Math.
  Sci. \textbf{72} (1996), no.~7, 162--165.

\bibitem{vanwamelen}
P.~van Wamelen, \emph{Examples of genus two {CM} curves defined over the
  rationals}, Math. Comp. \textbf{68} (1999), no.~225, 307--320.

\bibitem{weber}
H.-J. Weber, \emph{Hyperelliptic simple factors of {$J_0(N)$} with dimension at
  least {$3$}}, Experiment. Math. \textbf{6} (1997), no.~4, 273--287.

\bibitem{WengGenus3}
A.~Weng, \emph{A class of hyperelliptic {CM}-curves of genus three}, J.
  Ramanujan Math. Soc. \textbf{16} (2001), no.~4, 339--372.

\end{thebibliography}
\bibliographystyle{amsplain}

\end{document}